\def\stylebib{ext-alphabetic}
\theoremstyle{plain}
\newtheorem{thm}{\protect\theoremname}
\newtheorem{cor}{\protect\corollaryname}
\newtheorem{lem}{\protect\lemmaname}
\newtheorem{s-lem}[lem]{\protect\lemmaname}
\newtheorem{prop}{\protect\propositionname}
\theoremstyle{definition}
\newtheorem{exmp}{\protect\examplename}
\newtheorem*{exmp*}{\protect\examplename}
\crefname{thm}{Theorem}{Theorems}
\crefname{lem}{Lemma}{Lemmas}
\crefname{s-lem}{Lemma}{Lemmas}
\crefname{cor}{Corollary}{Corollaries}
\crefname{prop}{Proposition}{Propositions}
\crefname{defn}{Definition}{Definitions}
\crefname{exmp}{Example}{Examples}
\crefname{problem}{Problem}{Problems}
\crefname{figure}{Figure}{Figures}
\crefname{table}{Table}{Tables}
			\renewcommand*{\volnumdelim}{\addcolon\space}
	\renewcommand*{\ref}[1]{\cref{#1}}
		\def\stylebib{ext-numeric}
\providecommand{\lemmaname}{Lemma}
\providecommand{\propositionname}{Proposition}
\providecommand{\corollaryname}{Corollary}
\providecommand{\theoremname}{Theorem}
\providecommand{\definitionname}{Definition}
\providecommand{\examplename}{Example}
\title{Variational Tail Bounds for Norms of Random Vectors and Matrices}
\author{Sohail Bahmani} 
\begin{document}
\maketitle
\begin{abstract}
	We propose a variational tail bound for norms of random vectors and matrices under moment assumptions on their one-dimensional marginals. A simplified version of the bound that parametrizes the ``aggregating distribution'' using a certain pushforward of the Gaussian distribution is also provided. We apply the proposed method to reproduce some of the well-known bounds on norms of Gaussian random vectors, and also obtain dimension-free tail bounds for the Euclidean norm of random vectors with arbitrary moment profiles. Furthermore, we reproduce a dimension-free concentration inequality for sum of independent and identically distributed positive semidefinite matrices with sub-exponential marginals, and obtain a concentration inequality for the sample covariance matrix of sub-exponential random vectors. We also obtain a tail bound for the operator norm of a random matrix series whose random coefficients may have arbitrary moment profiles. Furthermore, we use coupling to formulate an abstraction of the proposed approach that applies more broadly. As a corollary, we derive a PAC-Bayesian-style bound in terms of a certain combination of the KL and R\'{e}nyi divergences between the prior and posterior distributions.

\end{abstract}
\section{Introduction}\label{sec:intro}
We revisit the fundamental problem of bounding norms of a random vector $X\in \mbb R^d$. In particular, for a prescribed norm $\norm{\cdot}$ we propose a new variational upper bound to $\norm{X}$ assuming suitable conditions on the moments of the one-dimensional marginals of $X$.

By viewing $\norm{X}$ as a supremum of linear functionals indexed by the unit ball of the dual norm $\norm{\cdot}_*$, variety of techniques that are developed to bound supremum of stochastic processes can be used to bound $\norm{X}$. For example, we can resort to \emph{generic chaining} \citep{Tal14} or its variants to formulate an upper bound for $\norm{X}$. If $X$ has a Gaussian distribution, Talagrand's celebrated \emph{majorizing measures theorem} \citep{Tal87} guarantees that these upper bounds are optimal up to constant factors. However, the bounds obtained through generic chaining are usually abstract in nature and not easy to evaluate since one needs to determine an optimal (or nearly-optimal) \emph{admissible sequence} of the subsets of the unit ball of $\norm{\cdot}_*$. In cases where $X$ is a sum of independent and identically distributed (i.i.d.) random vectors, methods developed in empirical processes theory can be applied. Namely, using the \emph{symmetrization} (see, e.g., \citep*[Lemma 11.4]{BLM13}) approach, the supremum of the corresponding empirical process can be bounded in terms of the \emph{Rademacher complexity} of the set of linear functionals indexed by the unit ball of $\norm{\cdot}_*$ (see \citep*[Ch. 3]{MRT19}, \citep[Ch. 26]{SB14}, and \citep[Ch. 3]{BLM13} for the definition of Rademacher complexity and the related historical remarks).

A distinct alternative to the mentioned methods is a variational approach, commonly known as the PAC-Bayesian argument  \citep{SW97,McA98}, that is primarily developed and applied to analyze the generalization error in statistical learning theory (see, e.g. \citep{LS02,Cat07,AB07,LLS10}). More recently, PAC-Bayesian arguments are used to derive non-vacuous generalization bounds for neural networks that perfectly fit the training samples \citep*{DR17,ZVA+19}. In addition to their application in statistical learning theory, PAC-Bayesian arguments are also used in robust statistics \citep{AC11,Cat12,CG17,Giu18} as well as non-asymptotic random matrix theory \citep{Oli16, Zhi24}.  In its most common formulation (see, e.g., \citep[Theorem 2.1]{Alq24}), to bound $\sup_{\theta \in \Theta} f_\theta(X)$ for a random variable $X$ and given a parameter space $\Theta$, the PAC-Bayesian argument uses the duality between the \emph{cumulant  generating function} and the \emph{Kullback--Leibler (KL) divergence}. Specifically, the variational characterization of the KL divergence \citep[Lemma 2.2]{Alq24} guarantees that
\begin{align}
	\E_{\theta\sim \P}\left(f_\theta(x)\right) & \le \log \E_{\theta\sim\P_{\mr{ref}}}\left(e^{f_\theta(x)}\right) + D_{\mr{KL}}(\P,\P_{\mr{ref}})\,,\label{eq:change-of-measure-KL}
\end{align}
where $D_{\mr{KL}}(\P,\P_{\mr{ref}})$ denotes the KL divergence between the arbitrary ``posterior'' $\P$ and the reference probability measure or  ``prior'' $\P_{\mr{ref}}$ assuming that $\P$ is absolutely continuous with respect to $\P_{\mr{ref}}$. The inequality \eqref{eq:change-of-measure-KL} holds for any $x$, and in particular for $x=X$. Under suitable measurability conditions, $\E_{\theta\sim\P_{\mr{ref}}}\left(e^{f_\theta(X)}\right)$, as a random variable, can be bounded probabilistically using Markov's inequality. It then suffices to choose a suitable reference measure and a suitable class $\mc P$ of aggregating posteriors such that $\sup_{\P\in \mc P}\E_{\theta\sim \P}\left(f_\theta(x)\right)\approx \sup_{\theta\in \Theta}f_\theta (X)$ and $\sup_{\P\in \mc P} D_{\mr{KL}}(\P,\P_{\mr{ref}})$ is small in a certain precise sense. While the PAC-Bayesian arguments are usually based on the change of measure inequality \eqref{eq:change-of-measure-KL}, alternative change of measure inequalities that use other probability metrics and divergences are also considered in the literature \citep{OH21, PG22}.

Some components of the PAC-Bayesian arguments complicate and constrain the application of this method. For example, the fundamental inequality \eqref{eq:change-of-measure-KL} is useful only if $f_\theta(X)$ has a finite exponential moment (in a neighborhood of the origin). More importantly, we need to have an approximation for the KL divergence term in \eqref{eq:change-of-measure-KL} that is both accurate and ``calculation-friendly''. These requirements add to the subtlety of choosing a suitable reference measure $\P_{\mr{ref}}$ and a suitable class of aggregating posteriors $\mc P$. For example, \citet[Theorem 1]{Zhi24} has established a matrix concentration inequality using the PAC-Bayesian argument in which the spectral norm is expressed as the supremum of a bilinear form, and the set $\mc P$ consists of carefully chosen truncated non-isotropic Gaussian distributions for the left and right factors of the bilinear form.

Focusing on tail bounds for the norm of random vectors (or matrices), we propose an alternative variational bound that has simpler structure and applies under fewer constraints compared to the PAC-Bayesian bounds. For example, the standard PAC-Bayesian argument does not apply under the conditions of our results in \ref{thm:general-Euclidean,thm:covariance-matrix-subexponential,thm:matrix-series}. The role of the main components of the proposed bound are also more transparent. Generalizations of the proposed bounds in terms more abstract form of couplings are presented in \ref{sec:coupling-formulation}. In particular, a PAC-Bayesian-style bound is obtained as a corollary in which a certain combination of the KL divergence and the \emph{R\'{e}nyi divergence} is used to measure the closeness of the posterior and prior distributions.
\section{Problem Setup and the Main Lemma}\label{sec:problem-setup+main-lemma}
Let $\norm{\cdot}$ be a norm in $\mbb R^d$ whose dual is denoted by $\norm{\cdot}_*$. Furthermore, denote the canonical centered unit balls of $\norm{\cdot}$ and $\norm{\cdot}_*$ respectively by $\mc B$ and $\mc B_*$. Throughout, we denote the boundary of a set $\mc S$ by $\partial \mc S$. We also use the notation $\norm{u}_M = \sqrt{u^\T M u}$ for any positive semidefinite matrix $M$. We also interpret $p!$ for $p\in \mbb R$ as the standard extension of the factorial to real numbers using the Gamma function. We use the expression $\alpha \lesssim \beta$ to suppress some absolute constant $c>0$ in the inequality $\alpha \le c \beta$.

Let us also introduce some notation that we use to formulate our results. For a probability measure $\P_0$ over $\mbb R^d$ we denote the expectation with respect to $U\sim \P_0$ by $\E_0$, and define
\begin{align*}
	\nu(\P_0) & \defeq \sup_{x\in \partial \mc B} \frac{1}{\P_0\left(|\inp{U,x}|\ge 1\right)}\,,
\end{align*}
which is related to the convex geometric notions of convex cap coverings and convex floating bodies \citep*{BL88, SW90,STTW25}. The central part of our results is the following elementary lemma.
\begin{s-lem}\label{lem:main-lemma}
Given a random variable $X\in \mbb R^d$, for all $u\in \mbb R^d$ and $p\ge 1$ let
\begin{align*}
	M_X(u,p) & \defeq \E\left(\left|\inp{u,X}\right|^p\right)\,.
\end{align*}
Then, we have
\begin{align}
	\E\left(\norm{X}^p\right) & \le \E_0\left(M_X(U,p)\right)\nu(\P_0)\,. \label{eq:main-bound-moments}
\end{align}
Furthermore, for any $t\ge 0$, with probability at least $1-e^{-t}$ we have
\begin{align}
	\norm{X} & \le \inf_{p\ge 1,\,\P_0}        e^{t/p} \left(\E_0\left(M_X(U,p)\right)\right)^{1/p} \nu^{1/p}(\P_0)\,.
	\label{eq:main-bound}
\end{align}
\end{s-lem}
\begin{proof}
	Observe that \eqref{eq:main-bound} follows from \eqref{eq:main-bound-moments} by applying Markov's inequality, taking the $p$-th root, and optimizing with respect to $p\ge 1$ and $\P_0$. So, it suffices to prove \eqref{eq:main-bound-moments}. For $x\in \mbb R^d$ define
	\begin{align*}
		Q_0(x) & \defeq \P_0\left(\left|\inp{U,x}\right|\ge \norm{x}\right)\,,
	\end{align*}
	which is invariant with respect to scaling of $x$, and observe that
	\begin{align*}
		\nu(\P_0) & = \sup_{x\in \partial \mc B} \frac{1}{Q_0(x)}\,.
	\end{align*}
	It follows from Markov's inequality that for any $p\ge 1$ and $x\in \mbb R^d$ we have
	\begin{align}
		\norm{x}^p & \le \E_0\left(\left|\inp{U,x}\right|^p\right)/Q_0(x) \nonumber                    \\
		           & \le \E_0\left(\left|\inp{U,x}\right|^p\right) \nu(\P_0)\,. \nonumber 
	\end{align}
	In particular, for $x=X$ we have
	\begin{align*}
		\norm{X}^p & \le \E_0\left(\left|\inp{U,X}\right|^p\right) \nu(\P_0)\,.
	\end{align*}
	Then, \eqref{eq:main-bound-moments} follows by taking the expectation with respect to $X$, and invoking Tonelli's theorem, whose application is justified because $(u,x)\mapsto |\langle u, x\rangle|^p$ is continuous and thus Borel measurable, i.e.,
	\begin{align*}
		\E\left(\norm{X}^p\right) & \le \E\E_0\left(\left|\inp{U,X}\right|^p\right) \nu(\P_0) = \E_0\left(M_X(U,p)\right)\nu(\P_0)\,.
	\end{align*}
\end{proof}

Let us evaluate the general result of \ref{lem:main-lemma} in some special cases where $M_X(u,p)$ is replaced by some explicit upper bound. Here, we focus on two simple examples for norms of Gaussian random vectors. We emphasize that the lemma can be applied under other suitable moment conditions as well. For example, we may assume that $X$ whose covariance matrix is $\Sigma$, has \emph{sub-exponential} marginals, i.e., for some $\eta > 0$ and all $p\ge 1$, $u\in \mbb R^d$ we have
\begin{align}
	M_X(u,p) & \le p! \left(\eta\,\norm{u}_\Sigma\right)^p\,. \label{eq:sub-exponential}
\end{align}
In fact, in \ref{cor:subGauss-subExp-Euclidean} below we recover, up to constant factors, the results in \citep[Proposition 3]{Zhi24} on the tail bounds for $\norm{X}_2$ under the condition \eqref{eq:sub-exponential}.

\begin{exmp}[Polyhedral norm of a standard Gaussian random vector]\label{exmp:polyhedral-Gaussians}
	Suppose that $X\in\mbb R^d$ is distributed as $\mr{Normal}(0,I)$, for which we have
	\begin{align*}
		M_X(u,2p) \le 2^{p} p! \norm{u}_2^{2p}\,.
	\end{align*}
	We examine the case where $\mc B$, the unit ball of the norm $\norm{\cdot}$, is a symmetric polytope with nonempty interior. In this scenario, $\mc B_*$ is also a polytope and we can write
	\begin{align*}
		\norm{X} & = \max_{u\in \mr{ext}(\mc B_*)} \inp{u,X}\,,
	\end{align*}
	where
	\begin{align*}
		\mr{ext}(\mc B_*) & =\{\pm u_1,\dotsc,\pm u_N\}\,,
	\end{align*}
	is the set of extreme points of $\mc B_*$. Since any norm can be approximated by polyhedral norms to arbitrary precision, this example provides general insights about the nature of the bound \eqref{eq:main-bound}.

	We choose $\P_0$ to be the law of
	\begin{align*}
		U & = \argmax_{u\in \rho_0\,\mr{ext}(\mc B_*)} \inp{u,G}\,,
	\end{align*}
	where $G \sim \mr{Normal}(0,I)$ and $\rho_0>0$ is a parameter to be specified. Let $[N] \defeq \{1,\dotsc,N\}$. For each $k\in[N]$ we define $c_k\ge 1 $ as the smallest real number such that for every $x\in \partial \mc B$, for at least $k$ different $i\in[N]$ we have $c_k\inp{w,u_i} \ge 1$. Formally,
	\begin{align}
		c_k & = \inf\left\{c\ge 1 \st \inf_{x\in \partial \mc B} \sum_{i\in[N]} \bbone(c|\inp{u_i,x}|\ge 1) \ge k\right\}\,.\label{eq:exmp1-c_k}
	\end{align}
	In particular, $c_1 = 1$ due to the fact that $\mc B$ is the polar set of $\mc B_*$. Furthermore, we define
	\begin{align}
		\pi_k & = \min_{I\subseteq[N]: |I|=k} \sum_{i\in I}\P(U=\rho_0 u_i)\,,\label{eq:exmp1-pi_k}
	\end{align}
	which is the smallest probability assigned to any $k$-subset of $\{\rho_0 u_1, \dotsc,\rho_0 u_N\}$ under $\P_0$. Observe that $\pi_k$ does not depend on the scaling parameter $\rho_0$.

	Choosing $\rho_0 = c_k$ and recalling the definition of $\nu(\P_0)$, we have
	\begin{align*}
		\nu(\P_0) & \le \pi_k^{-1}\,.
	\end{align*}
	Therefore, using the fact that $p! \le p^p$, it follows from \ref{lem:main-lemma} that
	\begin{align}
		\norm{X} & \le \inf_{p\ge 1,\,k\in[N]} e^{t/{2p}}\left(\sqrt{2p}\left(\sum_{i\in[N]}2\P_0\left(U=c_k u_i\right)\norm{u_i}_2^{2p}\right)^{1/(2p)} c_k \pi_k^{-1/(2p)}\right)\,,\label{eq:polyhedral-Gaussian}
	\end{align}
	with probability at least $1-e^{-t}$. This bound can be further simplified to
	\begin{align}
		\norm{X} & \le \inf_{p\ge 1,\,k\in[N]} e^{t/{2p}}\left(\sqrt{2p}c_k \pi_k^{-1/(2p)}\right) \max_{i\in[N]}\norm{u_i}_2\nonumber                             \\
		         & \le \sqrt{2e}\max_{i\in [N]}\norm{u_i}_2 \min_{k\in[N]} c_k\max\left\{1,\sqrt{t-\log\pi_k}\right\} \,,\label{eq:polyhedral-Gaussian-simplified}
	\end{align}

	For verification, we can recover, up to the constant factors, the well-known bound for the $\ell_\infty$-norm of $X\sim\mr{Normal}(0,I)$ which states that
	\begin{align*}
		\norm{X}_\infty & \le \sqrt{2\log(2d)} + \sqrt{t}\,,
	\end{align*}
	holds with probability at least $1-e^{-t}$ (see, e.g., \citep[Example 5.7]{BLM13}). This special case is captured in our formulation, by choosing $N=d$ and $u_1,\dotsc,u_d$ to be the canonical unit basis vectors in $\mbb R^d$. Then, due to symmetry, we have $\P_0\left(U = c_k u_1\right)=\dotsb = \P_0\left(U=c_k u_d\right) =  1/(2d)$. Thus
	\begin{align*}
		\sum_{i\in[d]}2\P_0\left(U=c_k u_i\right)\norm{u_i}_2^{2p} & = 1\,,
	\end{align*}
	and choosing $k=1$ we have $c_1 =1$ and $\pi_1 = 1/(2d)$. Therefore, the derived bound \eqref{eq:polyhedral-Gaussian} simplifies to\looseness=-1
	\begin{align*}
		\norm{X}_{\infty} & \le \inf_{p\ge 1} e^{t/{2p}}\left(\sqrt{2p}(2d)^{1/(2p)}\right) \\
		                  & \le \sqrt{2e(\log(2d)+t)}\,.
	\end{align*}
\end{exmp}

\subsection{Dimension-free tail bound for the Euclidean norm of random vectors with arbitrary moment profiles}\label{ssec:general-Euclidean}
Any norm can be approximated by polyhedral norms. However, using such an approximation is not always desirable because it ignores any special structure that the norm $\norm{\cdot}$ might have, and may produce bounds that are not explicit in terms of the parameters that describe the law of the random vector. \ref{lem:main-lemma} can still provide more explicit bounds in some special cases.
We use \ref{lem:main-lemma} to prove the following tail bound for the Euclidean norm of random vectors that have arbitrary ``moment profiles''. We emphasize that the standard PAC-Bayesian argument requires exponential moments to be finite, therefore it cannot, as a standalone method, address the general situation considered by the following theorem.
\begin{thm}\label{thm:general-Euclidean}
	Let $X\in \mbb R^d$ be a centered random vector with covariance $\Sigma$ and a non-decreasing moment profile $h\st{} [1,\infty)\to [0,\infty]$ that for every $u\in \mbb R^d$ and $p\ge 2$ satisfies
	\begin{align}
		\left(\E\left(\left|\inp{u,X}\right|^p\right)\right)^{1/p} & \le h(p) \norm{u}_{\Sigma}\,.\label{eq:moment-profile}
	\end{align}
	Then for any $t>0$, with probability at least $1-e^{-t}$ we have
	\begin{align}
		\norm{X}_2 & \le \inf_{p\ge 2} 2p^{-1/2}h(p) \left(\tr\left(\Sigma\right)+\frac{p}{2}\norm{\Sigma}_{\mr{op}}\right)^{1/2}  e^{(t+\log(2))/p} \,. \label{eq:general-Euclidean-bound}
	\end{align}
\end{thm}
\begin{proof}
	We use \ref{lem:main-lemma} with $\P_0$ being the law of $U = \kappa_0 G$ for a certain $\kappa_0>0$ and a standard Gaussian vector $G\in \mbb R^d$. The assumption on $X$ provides the inequality
	\begin{align*}
		M_X(u,p) & \le h^p(p) \norm{u}^{p}_{\Sigma}\,,
	\end{align*}
	which together with \ref{lem:moments-of-quadratic-form} yields
	\begin{align*}
		\left(\E_0\left(M_X(U,p)\right)\right)^{1/p} & \le  \kappa_0 h(p) \left(\E\left(\norm{G}_\Sigma^{p}\right)\right)^{1/p}                  \\
		                                             & \le \kappa_0 h(p) \left(\tr(\Sigma) + \frac{p}{2} \norm{\Sigma}_{\mr{op}}\right)^{1/2}\,.
	\end{align*}
	Furthermore, recalling that $\mc B$ is the unit $\ell_2$ ball, for any $x\in \partial \mc B$ we can write
	\begin{align*}
		\P_0 \left(\left|\inp{U,x}\right| \ge 1\right) & = \P\left(\kappa_0 |g| \ge 1\right) =  2\Phi(-\frac{1}{\kappa_0})\,,
	\end{align*}
	where $g\in \mbb R$ is a standard Gaussian random variable, and $\Phi(\cdot)$ denotes the standard Gaussian cumulative distribution function (CDF). It follows from straightforward calculus and the inequality $2\pi < 3e$ that $4\Phi(-z)-e^{-2z^2/3}$ is decreasing in $z\ge 0$, which implies that for all $z\ge 0$ we have\footnote{Here we favored simplicity of the lower bound over its accuracy. More accurate lower bounds for the quantile function, or the \emph{Mills ratio} of the standard Gaussian random variables [see e.g., \citep[Exercise 2.2]{Wai19} \citep[Exercise 7.8]{BLM13}, \citep{GU14}, and references therein] can be used instead.}
	\begin{align}
		\Phi(-z) & \ge e^{-2z^2/3}/4\,.\label{eq:Gaussian-CDF-bound}
	\end{align}
	Therefore, for all $x\in \mc B$ we have
	\begin{align*}
		\P_0\left(\left|\inp{U,x}\right| \ge 1\right) & \ge e^{-2/(3\kappa_0^2)}/2\,,
	\end{align*}
	and thus
	\begin{align*}
		\nu\left(\P_0\right) & \le 2e^{2/(3\kappa_0^2)}\,.
	\end{align*}
	It then follows from \ref{lem:main-lemma} (with the constraint $p\ge 2$ rather than $p\ge 1$) that with probability at least $1-e^{-t}$ we have
	\begin{align*}
		\norm{X}_2 & \le \inf_{p\ge 2,\, \kappa_0 >0}  \kappa_0 h(p) e^{2/(3p\kappa_0^2)} \left(\tr\left(\Sigma\right)+\frac{p}{2}\norm{\Sigma}_{\mr{op}}\right)^{1/2} e^{(t+\log(2))/p}\,.
	\end{align*}
	The bound \eqref{eq:general-Euclidean-bound} follows by choosing $\kappa_0 = 2/\sqrt{3p}$ and using the fact that $\sqrt{e/3} < 1 $.
\end{proof}

The following corollary provides more explicit bounds for random vectors that have sub-Gaussian or sub-exponential marginals.
\begin{cor}\label{cor:subGauss-subExp-Euclidean}
	Under the conditions of \ref{thm:general-Euclidean}, if $X$ has sub-Gaussian marginals, in the sense that $h(p) = \eta p^{1/2}$ in \eqref{eq:moment-profile} for some $\eta \ge 1$, then for any $t>0$, with probability at least $1-e^{-t}$ we have
	\begin{align}
		\norm{X}_2 & \le 6\eta \left(\tr\left(\Sigma\right) + t\norm{\Sigma}_{\mr{op}}\right)^{1/2}\,.\label{eq:subGauss-Euclidean}
	\end{align}
	Similarly, if $X$ has sub-exponential marginals, i.e., $h(p) = \eta p$ for some $\eta \ge 1$, then for any $t\ge 1$, with probability at least $1-e^{-t}$ we have
	\begin{align}
		\norm{X}_2 & \le 4\sqrt{e}\eta \left(\sqrt{t\tr\left(\Sigma\right)} + t\norm{\Sigma}_{\mr{op}}^{1/2}\right)\,.\label{eq:subExp-Euclidean}
	\end{align}
\end{cor}
\begin{proof}
	Let $r = \tr(\Sigma)/\norm{\Sigma}_{\mr{op}}$. Applying \ref{thm:general-Euclidean} in the sub-Gaussian case, with probability at least $1-e^{-t}$ we have
	\begin{align*}
		\norm{X}_2 & \le \inf_{p\ge 2} 2\eta \norm{\Sigma}_{\mr{op}}^{1/2}\left(r+\frac{p}{2}\right)^{1/2}  e^{(t+\log(2))/p}\,,
	\end{align*}
	which by choosing $p = 2(t+\log(2) + \sqrt{r(t+\log(2))}) \ge 2(\log(2) + \sqrt{r\log(2)})>2$ yields
	\begin{align*}
		\norm{X}_2 & \le  2\eta \norm{\Sigma}_{\mr{op}}^{1/2}\left(r+t+\log(2) + \sqrt{r(t+\log(2))}\right)^{1/2} \sqrt{e} \\
		           & \le  \sqrt{6e} \eta \norm{\Sigma}_{\mr{op}}^{1/2}\left(r+t+\log(2)\right)^{1/2}                       \\
		           & \le 6\eta  \norm{\Sigma}_{\mr{op}}^{1/2}\left(r+t\right)^{1/2}                                        \\
		           & = 6\eta \left(\tr\left(\Sigma\right) + t\norm{\Sigma}_{\mr{op}}\right)^{1/2}\,,
	\end{align*}
	where in the second line we used $2\sqrt{r(t+\log(2))} \le r+t+\log(2)$, and in the third line we used the facts that $r\ge 1$ and $e(1+\log(2))< 6$.

	In the sub-exponential case, \ref{thm:general-Euclidean} guarantees, with probability at least $1-e^{-t}$, that
	\begin{align*}
		\norm{X}_2 & \le \inf_{p\ge 2} 2\eta \norm{\Sigma}_{\mr{op}}^{1/2}\left(pr+\frac{p^2}{2}\right)^{1/2}  e^{(t+\log(2))/p}\,.
	\end{align*}
	Since $t\ge 1$, choosing $p=2t$ guarantees that $e^{(t+\log(2))/p}\le \sqrt{2e}$ and thus
	\begin{align*}
		\norm{X}_2 & \le 4\sqrt{e}\eta  \norm{\Sigma}_{\mr{op}}^{1/2}\left(tr+t^2\right)^{1/2}                         \\
		           & \le 4\sqrt{e}\eta \left(\sqrt{t\tr\left(\Sigma\right)} + t\norm{\Sigma}_{\mr{op}}^{1/2}\right)\,,
	\end{align*}
	where the second line follows from the sub-additivity of the square root.
\end{proof}

For random vectors with sub-Gaussian marginals, the tail bounds of the form \eqref{eq:subGauss-Euclidean} are well-known and can be derived using more standard methods (see, e.g., \citep[Exercise 6.3.5]{Ver18}). If $X$ is a centered Gaussian, the Gaussian concentration inequality \citep[Theorem 5.5]{BLM13} provides a better approximation for the tail behavior of $\norm{X}_2$ and guarantees that with probability at least $1-e^{-t}$ we have
\begin{align*}
	\norm{X}_2 & \le \sqrt{\tr(\Sigma)} + \sqrt{2\norm{\Sigma}_{\mr{op}}t}\,.
\end{align*}
Therefore, with $h(p) = \sqrt{2p/\pi}$ being a valid moment profile for the Gaussian random vector $X$, \eqref{eq:subGauss-Euclidean} recovers the inequality above up to a factor of $6\sqrt{2/\pi}<5$.

The tail bound \eqref{eq:subExp-Euclidean}, up to the constant factors, is equivalent to the tail bound due to \citet[Proposition 3]{Zhi24} for random vectors with sub-exponential marginals. As pointed out in \citep{Zhi24}, this bound applies to log-concave measures, including uniform measures on convex bodies, and as such it improves on a similar result due to \citet{Ale95}. Also, it can be viewed as a dimension-free specialization of a general result of \citet{LN20} to the case of the Euclidean norm of sub-exponential random vectors.
\section{Simplified Bounds via Pushforward of Gaussians}\label{sec:Gaussian-pushforward}
The tail bound of \ref{lem:main-lemma} holds in rather general situations, but it is not entirely calculation-friendly, mainly due to its implicit form and the minimization with respect to $\P_0$. Assuming that $X$ has sub-Gamma marginals, by characterizing $\P_0$ as a certain pushforward of the standard Gaussian measure, the following theorem provides a bound that can be optimized using a few scalar parameters. In particular, one can recover the tail bound in \ref{exmp:polyhedral-Gaussians}, and \eqref{eq:subGauss-Euclidean} of \ref{cor:subGauss-subExp-Euclidean} using this theorem. These analyses are provided in \ref{apx:examples-via-pushforward}.

\begin{thm}\label{thm:gaussian-pushforward}
	Let $X\in \mbb R^d$ be a zero-mean random variable with covariance matrix $\Sigma \succ 0$ and sub-Gamma marginals in the sense that for some constants $\eta_1,\eta_2\ge 0$, for every $p\ge 1$ and $u\in \mbb{R}^d$ we have
	\begin{align*}
		\E \left(\left|\inp{u, X}\right|^{2p}\right) \le p!\,\left(\eta_1\, \norm{u}_\Sigma\right)^{2p} + (2p)!\,\left(\eta_2\, \norm{u}_*\right)^{2p} \,.
	\end{align*}
	Furthermore, for parameters $\rho_0\ge 0$ and $\kappa_0> 0$, let
	\begin{align*}
		f_0(x) & = \inf_y \frac{\kappa_0}{2}\norm{x- y}_{\Sigma^{-1/2}}^2 + \rho_0 \norm{y}\,,
	\end{align*}
	With $(z)_+ \defeq \max\{z,0\}$ and $G\sim\mr{Normal}(0,I)$ being a standard Gaussian random variable in $\mbb R^d$, let
	\begin{align*}
		\ubar{\lambda}_{\Sigma}(\kappa_0,\rho_0) & = \inf_{x\in \partial \mc B} \E\left(\left(\left|\inp{\nabla f_0(\Sigma^{1/2}G),x}\right|-1\right)_+\right)\,,
	\end{align*}
	and define
	\begin{align*}
		T_{\geqslant}(s)  = 1- T_{<}(s) & = \P\left(\norm{G}_* \ge s\right)\,,
	\end{align*}
	\begin{align*}
		\Omega_{\Sigma}(p,\kappa_0,\rho_0) & = \eta_1 \sqrt{p}\min\Bigg\lbrace \rho_0 \norm{\Sigma}_\square^{1/2},\,\inf_{\tau \in (0,1]}\Bigg[ T^{1/(2p)}_\geqslant\left(\frac{\rho_0}{\tau \kappa_0}\right) \rho_0 \norm{\Sigma}_\square^{1/2} + T_<^{1/(4p)}\left(\frac{\rho_0}{\tau \kappa_0}\right) \\
		                                   & \hspace{3em} \cdot \kappa_0\left( (1-\tau)\norm{\Sigma}_{\mr{op}}^{1/4}\sqrt{\tr(\Sigma^{1/2})} + \tau \sqrt{\tr \Sigma} +\sqrt{2p} \sqrt{\norm{\Sigma}_{\mr{op}}}\right)\Bigg] \Bigg\rbrace + 2\eta_2 p \rho_0\,,
	\end{align*}
	and
	\begin{align*}
		\bar{\nu}_{\Sigma}(\kappa_0, \rho_0) & = \min\left\{\frac{\rho_0}{\ubar{\lambda}_{\Sigma}(\kappa_0,\rho_0)},\,\frac{\kappa_0^2 \left(T_\geqslant(\rho_0/\kappa_0)\left(\norm{\Sigma^{-1}}_{\mr{op}}\norm{\Sigma}_{\mr{op}}-1\right) + 1\right)\rad^2\left(\mc B\right)}{\ubar{\lambda}^2_{\Sigma}(\kappa_0,\rho_0)}+1\right\}\,,
	\end{align*}
	where $\norm{M}_{\square} = \sup_{u\in \mc B_*} u^\T M u$ for any $d\times d$ symmetric matrix $M$, and $\rad(\mc B) = \sup_{x\in \mc B} \norm{x}_2$ denotes the radius of $\mc B$ as measured by the $\ell_2$ norm. Then, for any $t\ge 0$, with probability at least $1-e^{-t}$, we have
	\begin{align}
		\norm{X} & \le \inf \left\{e^{t/(2p)}\,\Omega_{\Sigma}(p,\kappa_0,\rho_0)\,\bar{\nu}^{1/(2p)}_{\Sigma}(\kappa_0,\rho_0)\st{}p\ge 1,\,  \rho_0\ge 0,\, \kappa_0>0\right\}\,.\label{eq:G-pushforward}
	\end{align}
\end{thm}
It is worth mentioning that using duality we can show that the mapping $x\mapsto \nabla f_0(x)$ can be expressed equivalently as
\begin{align}
	\nabla f_0(x) & = \argmin_{u\in \rho_0\mc B_*} \frac{1}{2}\norm{\kappa_0 \Sigma ^{-1/2}x - u}^2_{\Sigma^{1/2}}\,. \label{eq:gradient-as-a-projection}
\end{align}
Specifically, $\nabla f_0(\Sigma^{1/2}G)$ is a projection of $\kappa_0 G$ onto $\rho_0\mc B_*$ in the sense that
\begin{align*}
	\nabla f_0(\Sigma^{1/2}G) & = \argmin_{u\in \rho_0 \mc B_*} \frac{1}{2}\norm{\kappa_0 G - u}^2_{\Sigma^{1/2}}\,.
\end{align*}
This characterization also shows that the law of $U=\nabla f_0(\Sigma^{1/2}G)$ is a mixture of a truncated Gaussian distribution on the interior of $\rho_0\mc B_*$ and another distribution on the boundary of $\rho_0\mc B_*$ that becomes more concentrated around the extreme points as $\kappa_0$ increases (relative to $\rho_0$).
\begin{proof}
	Taking $U=\nabla f_0(\Sigma^{1/2} G)$, it suffices to find the appropriate approximations for the terms that appear in \eqref{eq:main-bound}. The dual representation of $f_0(x)$, i.e.,
	\begin{align*}
		f_0(x) & = \sup_{u\in \rho_0\mc B_*} \inp{u,x} - \frac{1}{2\kappa_0}\norm{u}_{\Sigma^{1/2}}^2\,,
	\end{align*}
	reveals that for all $x\in \mbb R^d$ we have
	\begin{align}
		\norm{\nabla f_0(x)}_* & \le \rho_0\,.\label{eq:bounded-gradient}
	\end{align}

	Furthermore, \ref{lem:gradient-split-bound} below in \ref{apx:supporting-lemmas} guarantees that
	\begin{align*}
		\norm{\nabla f_0(\Sigma^{1/2}G)}_{\Sigma} & \le \norm{\Sigma}_{\mr{op}}^{1/4} (1-\tau)\kappa_0\norm{G}_{\Sigma^{1/2}} + \tau \kappa_0 \norm{G}_\Sigma\,.
	\end{align*}
	Combining the bound above and \eqref{eq:bounded-gradient} depending on whether $\norm{G}_*< \rho_0/(\tau \kappa_0)$ or not, yields
	\begin{align*}
		\norm{\nabla f_0 (\Sigma^{1/2}G)}_\Sigma & \le \bbone\left(\norm{G}_* \ge \rho_0/(\tau \kappa_0)\right)\rho_0 \norm{\Sigma}_\square^{1/2}                                                                                            \\
		                                         & \hspace{3em}+ \bbone\left(\norm{G}_* < \rho_0/(\tau \kappa_0)\right)\left(\norm{\Sigma}_{\mr{op}}^{1/4} (1-\tau)\kappa_0\norm{G}_{\Sigma^{1/2}} + \tau \kappa_0 \norm{G}_\Sigma\right)\,.
	\end{align*}
	Recalling the definitions of the shorthands $T_\geqslant(\cdot)$ and $T_{<}(\cdot)$, we can write
	\begin{align*}
		 & \left(\E\left(\norm{\nabla f_0 (\Sigma^{1/2}G)}_\Sigma^{2p}\right)\right)^{1/(2p)}   \\
		 & \le T_\geqslant^{1/(2p)}\left(\frac{\rho_0}{\tau\kappa_0}\right) \rho_0 \norm{\Sigma}_\square^{1/2} + \\
		 &  T_<^{1/(4p)}\left(\frac{\rho_0}{\tau \kappa_0}\right)\left((1-\tau)\kappa_0\norm{\Sigma}_{\mr{op}}^{1/4} \left(\E\left(\norm{G}_{\Sigma^{1/2}}^{4p}\right)\right)^{1/(4p)} + \tau \kappa_0 \left(\E\left(\norm{G}_\Sigma^{4p}\right)\right)^{1/(4p)}\right)\,, &
	\end{align*}
	where we used the triangle inequality and the inequality $\norm{\xi_1 \xi_2}_{L^{2p}}\le \norm{\xi_1}_{L^{4p}}\norm{\xi_2}_{L^{4p}}$ with $\norm{\cdot}_{L^q} = \left(\E(\left|\cdot\right|^q)\right)^{1/q}$ denoting the $L^q$ norm. Putting the derived bounds together, using \ref{lem:moments-of-quadratic-form}, optimizing with respect to $\tau\in(0,1]$, and recalling the definition of $\Omega_{\Sigma}(p,\kappa_0,\rho_0)$, we obtain
	\begin{align}
		\left(\E_0\left(M_X(\nabla f_0 (\Sigma^{1/2}G),2p)\right)\right)^{1/(2p)} & \le \Omega_{\Sigma}(p,\kappa_0,\rho_0)\,. \label{eq:Omega-bound}
	\end{align}

	Furthermore, using the second moment method (see \ref{lem:PZ} below in \ref{apx:supporting-lemmas}) we have
	\begin{align*}
		\P\left(\left|\inp{\nabla f_0(\Sigma^{1/2}G),x}\right| \ge 1\right) & \ge \frac{\left(\E\left(|\inp{\nabla f_0(\Sigma^{1/2}G),x}| - 1\right)_+\right)^2}{\E\left(\left(|\inp{\nabla f_0(\Sigma^{1/2}G),x}|-1\right)_+^2\right)}\,,
	\end{align*}
	and thus
	\begin{align*}
		\nu(\P_0) & \le \sup_{x\in \partial \mc B} \frac{\E\left(\left(|\inp{\nabla f_0(\Sigma^{1/2}G),x}|-1\right)_+^2\right)}{\left(\E\left(|\inp{\nabla f_0(\Sigma^{1/2}G),x}| - 1\right)_+\right)^2}\,.
	\end{align*}
	To further simplify the right-hand side of the inequality above we consider two cases. In the first case, for $x\in \partial \mc B$ we use the inequality
	\begin{align*}
		\E\left(\left(|\inp{\nabla f_0(\Sigma^{1/2}G),x}|-1\right)_+^2\right) & \le \rho_0\E\left(\left(|\inp{\nabla f_0(\Sigma^{1/2}G),x}|-1\right)_+\right)\,,
	\end{align*}
	which implies
	\begin{align*}
		\nu(\P_0) & \le \sup_{x\in \partial \mc B} \frac{\rho_0}{\left(\E\left(|\inp{\nabla f_0(\Sigma^{1/2}G),x}| - 1\right)_+\right)} \le \frac{\rho_0}{\ubar{\lambda}_{\Sigma}(\kappa_0,\rho_0)}\,.
	\end{align*}
	In the second case, the Gaussian Poincar\'{e} inequality\footnote{We apply the inequality to function $g\mapsto \left(|\inp{\nabla f_0(\Sigma^{1/2}g),x}| - 1\right)_+$ which is weakly differentiable for $\kappa_0<\infty$.} \citep[Theorem 3.20]{BLM13} yields
	\begin{align*}
		\var\left(\left(|\inp{\nabla f_0(\Sigma^{1/2}G),x}| - 1\right)_+\right) & \le \E\left(\norm{\Sigma^{1/2}\nabla^2 f_0(\Sigma^{1/2}G)x}_2^2\right)\,.
	\end{align*}
	The definition of $f_0$ reveals that for a certain convex function $g_0$ we have $f_0 + g_0 = \kappa_0\norm{\cdot}_{\Sigma^{1/2}}^2/2$. It then follows from Alexandrov's differentiability theorem (see, e.g., \citep[Theorem 6.9]{EG25}, \citep[Theorems 14.25 and 14.1]{Vil09}, and \citep{Roc99}) that $\nabla^2 f_0(\Sigma^{1/2}G)\preceq \kappa_0\Sigma^{-1/2}$ almost surely. Particularly, $\nabla^2 f(\Sigma^{1/2}G) = \kappa_0\Sigma^{-1/2}$ whenever $\kappa_0\norm{G}_*< \rho_0$. Therefore, we can write
	\begin{align*}
		 & \var\left(\left(|\inp{\nabla f_0(\Sigma^{1/2}G),x}| - 1\right)_+\right)                                                                                                                                                          \\
		 & \le  \E\left(\bbone\left(\norm{G}_*\ge \rho_0/\kappa_0\right)\norm{\Sigma^{-1}}_{\mr{op}}\norm{\nabla^2 f_0(\Sigma^{1/2}G)}_{\mr{op}}\norm{x}_2^2 + \bbone\left(\norm{G}_* < \rho_0/\kappa_0\right)\kappa_0^2\norm{x}_2^2\right) \\
		 & \le \kappa_0^2 \left(T_\geqslant(\rho_0/\kappa_0)\left(\norm{\Sigma^{-1}}_{\mr{op}}\norm{\Sigma}_{\mr{op}}-1\right) + 1\right)\norm{x}_2^2\,,
	\end{align*}
	which in turn implies
	\begin{align*}
		\nu(\P_0) & \le \frac{\kappa_0^2\left(T_\geqslant(\rho_0/\kappa_0)\left(\norm{\Sigma^{-1}}_{\mr{op}}\norm{\Sigma}_{\mr{op}}-1\right) + 1\right) \rad^2\left(\mc B\right)}{\ubar{\lambda}^2_{\Sigma}(\kappa_0,\rho_0)}+1\,.
	\end{align*}
	Therefore, recalling the definition of $\bar{\nu}_{\Sigma}(\kappa_0,\rho_0)$, we have shown that
	\begin{align}
		\nu(\P_0) & \le \bar{\nu}_{\Sigma}(\kappa_0,\rho_0)\,. \label{eq:nu-bar-bound}
	\end{align}
	Then \eqref{eq:G-pushforward} follows by applying the inequalities \eqref{eq:Omega-bound} and \eqref{eq:nu-bar-bound} on the bound provided by \ref{lem:main-lemma}.
\end{proof}
\section{Application to Random Matrices}\label{sec:random-matrix-examples}
\subsection{A concentration inequality for sum of random positive semidefinite matrices}\label{sec:sum-of-random-PSD-matrices}
We mentioned previously that we may easily adapt \ref{lem:main-lemma} to apply under different moment conditions. Furthermore, the special structure of the norm of interest $\norm{\cdot}$ can be leveraged to adjust the choice of $\P_0$ in \ref{lem:main-lemma}. As an example, we use \ref{lem:main-lemma} to recover, up to the constant factors, a result of \citet[Theorem 1]{Zhi24} that itself generalizes a concentration inequality due to \citet[Theorem 9]{KL17} for sample covariance matrices of sub-Gaussian random vectors.\footnote{The emphasis in \citep{KL17} is on the concentration inequalities in possibly infinite dimensional spaces.}
\begin{thm}\label{thm:matrix-concentration}
	Let $Z_1,\dotsc,Z_n$ be i.i.d. copies of a random positive semidefinite matrix $Z  \in \mbb R^{\ell \times \ell}$ whose mean is $\Sigma = \E Z$ and for some parameter $\eta \ge 1$ and every $p\ge 1$ and $u\in \mbb R^\ell$ obeys
	\begin{align*}
		\left(\E\left(\left|u^\T Z u\right|^p\right)\right)^{1/p} & \le \eta p \norm{u}_\Sigma^2\,.
	\end{align*}
	Denoting the effective rank of $\Sigma$ by
	\begin{align*}
		r_{\mr{eff}}(\Sigma) & \defeq \frac{\tr{\Sigma}}{\norm{\Sigma}_{\mr{op}}}\,,
	\end{align*}
	for any $t\ge 0$, with probability at least $1-e^{-t}$, we have
	\begin{align*}
		\norm{\frac{1}{n}\sum_{i\in[n]}Z_i - \Sigma}_{\mr{op}} & \lesssim \eta \norm{\Sigma}_{\mr{op}} \sqrt{\frac{2r_{\mr{eff}}(\Sigma) + t}{n}}\max\left\{1,\sqrt{\frac{r_{\mr{eff}}(\Sigma) + t/2}{n}}\right\}\,.
	\end{align*}
\end{thm}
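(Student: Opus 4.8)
The plan is to apply \ref{lem:master-lemma} to the zero-mean random symmetric matrix $X \defeq \frac{1}{n}\sum_{i\in[n]}Y_i-\Sigma$, viewed as a point in the space of symmetric $\ell\times\ell$ matrices equipped with the spectral norm $\norm{\cdot}_{\mr{op}}$ and the trace inner product. The dual norm is then the nuclear norm, whose unit ball has extreme points $\{\pm vv^\T : \norm{v}_2=1\}$, so the role played by the ``$\argmax$'' aggregating laws in \ref{exmp:polyhedral-Gaussians} is taken here by the law $\P_0$ of $U=\kappa_0\,vv^\T$, where $\kappa_0>0$ is a scalar parameter and $v$ is a unit vector of ``$\Sigma$-adapted'' Gaussian type --- concretely, $v=\Pi g/\norm{\Pi g}_2$ with $g\sim\msf{Normal}(0,I_\ell)$ and $\Pi$ the orthogonal projection onto the span of the leading $\lceil C\,r_{\mr{eff}}(\Sigma)\rceil$ eigenvectors of $\Sigma$ (equivalently, one may take $v$ to be the dominant eigenvector of a $\Sigma$-scaled Gaussian symmetric matrix). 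The point of this choice is that $v$ is essentially uniform on a sphere of dimension $\asymp r_{\mr{eff}}(\Sigma)$ inside the leading eigenspace of $\Sigma$, yet $\norm{v}_2=1$ and hence $\norm{v}_\Sigma^2\le\norm{\Sigma}_{\mr{op}}$ deterministically. With this $\P_0$ one must estimate the two factors in \eqref{eq:master-bound}.

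For the moment factor, fix a unit vector $v$ and write $v^\T Xv=\frac1n\sum_{i\in[n]}Z_i$ with $Z_i\defeq v^\T Y_iv-v^\T\Sigma v$ i.i.d., centered, and bounded below by $-\norm{v}_\Sigma^2$ since $Y_i\succeq0$. Writing $\norm{Z}_q=\bigl(\E|Z|^q\bigr)^{1/q}$, the hypothesis gives $\norm{Z_i}_q\le\norm{v^\T Yv}_q+\norm{v}_\Sigma^2\le 2\eta q\,\norm{v}_\Sigma^2\le 2\eta q\,\norm{\Sigma}_{\mr{op}}$ for all $q\ge1$, so each $Z_i$ is sub-exponential with variance proxy and scale both of order $\eta\norm{\Sigma}_{\mr{op}}$. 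A standard Bernstein-type moment bound for sums of independent sub-exponential variables then yields $\norm{v^\T Xv}_p\lesssim\eta\,\norm{\Sigma}_{\mr{op}}\bigl(\sqrt{p/n}+p/n\bigr)$, \emph{uniformly} over $\norm{v}_2=1$. Since $\inp{U,X}=\kappa_0\,v^\T Xv$, this gives $\bigl(\E_0(M_X(U,p))\bigr)^{1/p}\lesssim\kappa_0\,\eta\,\norm{\Sigma}_{\mr{op}}\bigl(\sqrt{p/n}+p/n\bigr)$.

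The quantile factor $\nu(\P_0)=\sup_{x\in\partial\mc B}1/\P\bigl(\kappa_0|v^\T xv|\ge1\bigr)$ is the delicate part, and here the positive semidefiniteness of the $Y_i$ is essential. For a rank-one direction $x=uu^\T$ we have $\inp{U,x}=\kappa_0\inp{v,u}^2=\kappa_0\norm{\Pi u}_2^2\,\inp{v,\widehat{\Pi u}}^2$, and if $\norm{\Pi u}_2^2\gtrsim1$ then Gaussian (sphere) anti-concentration gives $\P\bigl(\kappa_0\inp{v,u}^2\ge1\bigr)\gtrsim e^{-C'r_{\mr{eff}}(\Sigma)}$ once $\kappa_0$ is a large enough absolute constant (a similar bound, obtained by testing against the leading eigenvector of $x$, handles non-rank-one $x$). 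The condition $\norm{\Pi u}_2^2\gtrsim1$ cannot hold for every unit $u$, but it \emph{does} hold for the directions that matter: if $\norm{X}_{\mr{op}}$ exceeds a threshold $\tau\gtrsim\norm{\Sigma}_{\mr{op}}$, then $\frac1n\sum_iY_i\succeq0$ forces the extremal direction to have a constant fraction of its mass on the span of the eigenvectors of $\Sigma$ with eigenvalue $\gtrsim\norm{\Sigma}_{\mr{op}}$ --- a subspace of dimension at most $r_{\mr{eff}}(\Sigma)/c$, contained in $\mr{ran}(\Pi)$ for a suitable $C$. Turning this heuristic into a genuine bound requires re-running the short Markov argument of \ref{lem:master-lemma} with the supremum in $\nu$ restricted to these relevant directions (equivalently, absorbing the excluded directions into the trivial part of the event $\{\norm{X}_{\mr{op}}>\tau\}$ via the PSD structure, \emph{without} passing through an $\varepsilon$-net of the sphere); the outcome is $\nu(\P_0)\lesssim e^{C'r_{\mr{eff}}(\Sigma)}$ for the purpose of this application. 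I expect this step --- pinning down that only $\sim r_{\mr{eff}}(\Sigma)$, rather than $\sim\ell$, directions matter, and implementing the restricted supremum rigorously --- to be the main obstacle; it is precisely where the effective rank enters.

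Combining the two estimates in \eqref{eq:master-bound}, with probability at least $1-e^{-t}$,
\begin{align*}
	\norm{X}_{\mr{op}} & \lesssim \inf_{p\ge1}\;e^{(t+C'r_{\mr{eff}}(\Sigma))/p}\;\kappa_0\,\eta\,\norm{\Sigma}_{\mr{op}}\bigl(\sqrt{p/n}+p/n\bigr)\,,
\end{align*}
and choosing $\kappa_0\asymp1$ together with $p\asymp r_{\mr{eff}}(\Sigma)+t$ makes the exponential prefactor $O(1)$ while $\sqrt{p/n}+p/n\asymp\sqrt{(r_{\mr{eff}}(\Sigma)+t)/n}\,\max\{1,\sqrt{(r_{\mr{eff}}(\Sigma)+t)/n}\}$, which reproduces the claimed inequality up to absolute constants (the discrepancy between the ``$2r_{\mr{eff}}$'' and ``$r_{\mr{eff}}$'' inside the two square roots being immaterial here). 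Apart from the quantile estimate, the remaining steps are the sub-exponential bookkeeping already illustrated in \ref{exmp:polyhedral-Gaussians} and \ref{exmp:Euclidean-Gaussians}.
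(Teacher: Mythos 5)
Your plan diverges from the paper precisely at the step you yourself flag as the main obstacle, and that obstacle is a genuine gap rather than a deferred technicality. With $\P_0$ the law of $U=\kappa_0\,vv^\T$ and $v$ supported in a rank-$O(r_{\mr{eff}}(\Sigma))$ subspace $\mr{ran}(\Pi)$, the quantity $\nu(\P_0)=\sup_{x\in\partial\mc B}\P_0(|\inp{U,x}|\ge 1)^{-1}$ is literally infinite: any unit-spectral-norm $x$ supported in $\mr{ran}(\Pi)^\perp$ has $\inp{U,x}\equiv 0$. Your fix is to restrict the supremum to ``directions that matter'' by invoking the PSD structure, but \ref{lem:master-lemma} as stated (and as proved) applies the bound $\norm{x}^p\le \E_0(|\inp{U,x}|^p)/Q_0(x)$ at $x=X$ for the \emph{full} supremum, and there is no event decomposition in the lemma's proof for you to ``absorb'' excluded directions into. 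The heuristic that $\frac1n\sum_i Y_i\succeq 0$ forces the extremal direction into the leading eigenspace only kicks in when $\norm{X}_{\mr{op}}\gtrsim\norm{\Sigma}_{\mr{op}}$, whereas the bound you are proving is often much smaller than $\norm{\Sigma}_{\mr{op}}$ (when $n\gg r_{\mr{eff}}(\Sigma)+t$), so the ``trivial part'' is not trivial in the regime of interest. Finalizing this would require a separate, nontrivial localization argument that you do not supply.

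The paper avoids the issue entirely by a different choice of aggregating law and a different mechanism for where the effective rank enters. It takes $U=\kappa_0^2\,GG^\T$ with $G\sim\msf{Normal}(0,I_\ell)$ \emph{full-dimensional and unnormalized}. The quantile factor $\P(\kappa_0^2|G^\T MG|\ge 1)$ is then bounded below by a dimension-free constant via a coupling due to Chatterjee: replacing the component of $G$ along the leading singular vector $v$ of $M$ by an independent copy $\tilde g$ reduces $G^\T MG-\tilde G^\T M\tilde G$ to $\pm(\inp{G,v}^2-\tilde g^2)$, a function of two independent scalar Gaussians, so $\nu(\P_0)\lesssim e^{4/(3\kappa_0^2)}$ with no dependence on $\ell$ or $r_{\mr{eff}}$. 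The effective rank instead comes out of the moment factor: $\E(|G^\T XG|^p)^{1/p}$ is controlled via symmetrization, Lata\l{}a's moment inequality for sums of independent symmetric random variables, and \ref{lem:moments-of-quadratic-form}, producing the factor $\tr(\Sigma)+p\norm{\Sigma}_{\mr{op}}=\norm{\Sigma}_{\mr{op}}(r_{\mr{eff}}(\Sigma)+p)$; optimizing $\kappa_0$ and taking $p\asymp r_{\mr{eff}}(\Sigma)+t$ finishes. So the ``which directions matter'' question you were stuck on never has to be answered: the dimension reduction is done inside the probability bound by Chatterjee's trick, not by shrinking the support of $\P_0$. Your moment-side bookkeeping (symmetrization/Bernstein for the scalars $v^\T Xv$) is essentially sound, but you would want to keep the full Gaussian quadratic form so that the $\tr(\Sigma)+p\norm{\Sigma}_{\mr{op}}$ factor, hence the effective rank, appears there.
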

\begin{proof}
	Let $X = \sum_{i=1}^n \left(Z_i - \Sigma\right)$, and $\P_0$ be the law of $U= \kappa_0^2 GG^\T$ where $G$ is a standard Gaussian random vector in $\mbb R^\ell$ independent of $(Z_i)_{i\in[n]}$.
	With $\mbb M_{\mr{sym}}^{\ell}$ denoting the set of $\ell\times \ell$ real symmetric matrices, it follows from \ref{lem:main-lemma} that
	\begin{align}
		\norm{X}_{\mr{op}} & \le \sup_{M\in \mbb M_{\mr{sym}}^\ell\st\norm{M}_{\mr{op}}=1}e^{t/p}\kappa_0^2 \left(\E\left(\left|G^\T X G\right|^p\right)\right)^{1/p}\left(\P\left(\kappa_0^2\left|G^\T M G \right|\ge 1\right)\right)^{-1/p}\,. \label{eq:matrix-master}
	\end{align}
	First, we bound $\left(\E\left(\left|G^\T X G\right|^p\right)\right)^{1/p}$. With $\varepsilon_1,\dotsc,\varepsilon_n$ being a sequence of i.i.d. Rademacher random variables independent of everything else, the standard symmetrization argument guarantees
	\begin{align*}
		\left(\E\left(\left|u^\T X u\right|^p\right)\right)^{1/p} & \le 2 \left(\E\left(\left|\sum_{i\in[n]} \varepsilon_i u^\T Z_i u\right|^p\right)\right)^{1/p}\,.
	\end{align*}
	Since $\sum_{i\in[n]} \varepsilon_i u^\T Z_i u$ is a sum of independent symmetric random variables, for $p\ge 2$, it follows from a moment bound due to \citet[Corollary 2]{Lat97} that 
	\begin{align}
		\left(\E\left(\left|u^\T X u\right|^p\right)\right)^{1/p} & \lesssim \sup_{q\in[\max\{2,p/n\},p]}\frac{p}{q}\left(\frac{n}{p}\right)^{1/q}\left(\E\left(\left|u^\T Zu\right|^q\right)\right)^{1/q} \nonumber \\ 
		                                                          & \lesssim \sup_{q\in[\max\{2,p/n\},p]}\eta p\left(\frac{n}{p}\right)^{1/q}\norm{u}_\Sigma^2                                             \nonumber                      \\
		                                                          & \lesssim \eta \max\{\sqrt{np},p\}\,\norm{u}_\Sigma^2\,. \label{eq:marginal-moments-Latala}
	\end{align}
	In fact, by doubling the hidden constant, \eqref{eq:marginal-moments-Latala} holds for all $p\ge 1$, because for $p\in [1,2]$ we have $\left(\E\left(\left|u^\T X u\right|^p\right)\right)^{1/p}\le \left(\E\left(\left|u^\T X u\right|^2\right)\right)^{1/2}$ and $\max\{\sqrt{np},p\} \le \max\{\sqrt{2n},2\}\le 2 \max\{\sqrt{np},p\}$. Therefore, using \eqref{eq:marginal-moments-Latala}, for any $p\ge 1$,  together with \ref{lem:moments-of-quadratic-form} we obtain
	\begin{align}
		\left(\E\left(\left|G^\T X G\right|^p\right)\right)^{1/p} & \lesssim \eta \max\{\sqrt{np},p\}  \left(\E\left(\norm{G}_\Sigma^{2p}\right)\right)^{1/p} \nonumber                                \\
		                                                          & \lesssim \eta \max\{\sqrt{np},p\} \left(\tr\left(\Sigma\right) + p \norm{\Sigma}_{\mr{op}}\right)\,.\label{eq:matrix-moment-bound}
	\end{align}

	Next, we bound $\P\left(\kappa_0^2\left|G^\T M G \right|\ge 1\right)$ for $M \in \mbb M_{\mr{sym}}^\ell$ with $\norm{M}_{\mr{op}} = 1$. We use a simple form of a general technique due to \citet[Lemma 1.2]{Cha19}. Let $\tilde{G} = (I - vv^\T)G + \tilde{g} v$ where $v$ is the singular vector of $M$ corresponding to the singular value $1$, and $\tilde{g}\sim \mr{Normal}(0,1)$ is independent of $G$. Since $G$ and $\tilde{G}$ are both standard Gaussian random vectors, we have
	\begin{align*}
		2\P\left(\kappa_0^2|G^\T M G| \ge 1\right) & = \P\left(\kappa_0^2|G^\T M G| \ge 1\right) + \P_0\left(\kappa_0^2|\tilde{G}^\T M \tilde{G}| \ge 1\right) \\ & \ge \P\left(\kappa_0^2|\tilde{G}^\T M \tilde{G} - G^\T M G| \ge 2\right)        \\
		                                           & = \P\left(\kappa_0^2\left|\inp{\tilde{G},v}^2 - \inp{G,v}^2\right| \ge 2\right)                           \\
		                                           & = \P\left(\kappa_0^2\left|\inp{\tilde{G}+G,v}\right|\left|\inp{\tilde{G}-G,v}\right| \ge 2\right)\,.
	\end{align*}
	Observe that, by construction, $\inp{\tilde{G} + G,v}/\sqrt{2}$ and $\inp{\tilde{G}-G,v}/\sqrt{2}$ are independent standard Gaussian random variables. Therefore, the above inequality implies that
	\begin{align}
		\P\left(\kappa_0^2|G^\T M G| \ge 1\right) & \ge \frac{1}{2}\P\left(\frac{1}{\sqrt{2}}\left|\inp{\tilde{G}+G,v}\right|\ge \frac{1}{\kappa_0}\right)\P\left(\frac{1}{\sqrt{2}}\left|\inp{\tilde{G}-G,v}\right| \ge \frac{1}{\kappa_0}\right) \nonumber \\
		                                          & = \frac{1}{2}\left(2\Phi\left(-\kappa_0^{-1}\right)\right)^2 \nonumber                                                                                                                                   \\
		                                          & \ge \frac{1}{8}e^{-4/(3\kappa_0^2)}\,, \label{eq:matrix-probability-bound}
	\end{align}
	where we used \eqref{eq:Gaussian-CDF-bound} on the third line. Applying the bounds \eqref{eq:matrix-moment-bound} and \eqref{eq:matrix-probability-bound} in \eqref{eq:matrix-master} we obtain
	\begin{align*}
		\norm{X}_{\mr{op}} & \lesssim e^{t/p}\eta \max\{\sqrt{np},p\}\kappa_0^2 \left(\tr(\Sigma) + p\norm{\Sigma}_{\mr{op}}\right)8^{1/p}e^{4/(3\kappa_0^2 p)}                          \\
		                   & = C \eta \norm{\Sigma}_{\mr{op}}e^{(t+\log(8))/p}\sqrt{\max\{n,p\}}\left(p^{-1/2}r_{\mr{eff}}(\Sigma) + p^{1/2}\right)\kappa_0^2 p e^{4/(3\kappa_0^2 p)}\,,
	\end{align*}
	where $C$ represents a positive absolute constant. Minimizing the right-hand side of the inequality with respect to $\kappa_0 >0$, choosing $p = r_{\mr{eff}}(\Sigma) + t/\log(8)$, and using the facts that $r_{\mr{eff}}(\Sigma)\ge 1$ and $\log(8)>2$ we conclude that with probability at least $1-e^{-t}$ we have
	\begin{align*}
		\norm{\frac{1}{n}\sum_{i\in[n]}Z_i - \Sigma}_{\mr{op}} = \frac{1}{n}\norm{X}_{\mr{op}} & \lesssim \eta \norm{\Sigma}_{\mr{op}}\sqrt{\frac{2r_{\mr{eff}}(\Sigma) + t}{n}}\max\left\{1,\sqrt{\frac{r_{\mr{eff}}(\Sigma) + t/2}{n}}\right\}\,. 
	\end{align*}
\end{proof}
\subsection{A concentration inequality for the sample covariance matrix of sub-exponential random vectors}\label{ssec:covariance-subesponential}
\citet[Theorem 3]{Zhi24} also provides a concentration inequality for the sample covariance matrix of centered log-concave distributions. Following \citet{ALPT09}, \citet{Zhi24} approximates the quadratic form induced by the sample covariance matrix by a term obtained through a certain direction-dependent truncation, and a term formed by the ``peaky'' residuals. The PAC-Bayesian argument is then used to obtain the tail bound for the ``truncated'' term, whereas the tail bound for peaky term is obtained using the \emph{decoupling-chaining} method \citep[Proposition 9.4.2]{Tal14} as cited in \citep{Zhi24}, combined with the tail bound for the Euclidean norm of log-concave random vectors due to \citet[Thoerem 1.1]{Pao06} (see also \citep[Theorem 2]{ALL+14}). In fact, Paouris's inequality is crucial in the proof of \citet[Theorem 3]{Zhi24} in the sense that it is tailored to log-concave measures instead of merely using the sub-exponential property.\footnote{Log-concave measures are ``regular'' (see, e.g., \citep[Definition 3.4]{LW08}, \citep[the statement following Definition 2]{Lat17}, \citep{Mur22}) which also implies that they are sub-exponential.}

Our next result, \ref{thm:covariance-matrix-subexponential}, provides a concentration inequality for the sample covariance matrix of random vectors with sub-exponential distribution. Similar to the model considered in \ref{thm:general-Euclidean}, the marginals of the sample covariance matrix do not have finite exponential moments, which rules out the analysis based solely on the PAC-Bayesian argument. While not adapted for log-concave distributions, without resorting to truncation techniques and relying only on \ref{lem:main-lemma}, we can partially recover the result of \citet[Theorem 3]{Zhi24} with a matching confidence level. Specifically, under the extra condition that the number of samples is at least the cube of the effective rank of the population covariance matrix, the tail bound we provide below in \ref{thm:covariance-matrix-subexponential} is equivalent to the tail bound provided by \citet[Theorem 3]{Zhi24}.

\begin{thm}\label{thm:covariance-matrix-subexponential}
	Let $Z_1,\dotsc,Z_n$ be independent copies of a centered random vector $Z\in \mbb R^d$ with covariance matrix $\Sigma = \E\left(ZZ^\T\right)$ and sub-exponential marginals, i.e., for a certain parameter $\eta \ge 1$ and every $u\in \mbb R^d$ and $p\ge 1$ we have
	\begin{align}
		\left(\E\left(\left|\inp{u,Z}\right|^p\right)\right)^{1/p} & \le \eta p \norm{u}_\Sigma\,. \label{eq:subexponential-marginals}
	\end{align}

	Then, with $r = r_\mr{eff}(\Sigma) = \tr\left(\Sigma\right)/\norm{\Sigma}_{\mr{op}}$ denoting the effective rank of $\Sigma$, for any $t\ge 0$, with probability at least $1-e^{-t}$ we have
	\begin{align*}
		\norm{\frac{1}{n}\sum_{i=1}^n Z_iZ_i^\T - \Sigma}_{\mr{op}} & \lesssim  \begin{cases}
			                                                                        \eta^2 \norm{\Sigma}_{\mr{op}} \max\left\{\frac{\left(r+t+\log(8)\right)^2}{n},\,\left(\frac{r+t+\log(8)}{n}\right)^{1/2}\right\}\,,        & \text{if } r \le n^{1/3}  \\
			                                                                        \eta^2 \norm{\Sigma}_{\mr{op}} \max\left\{\frac{\left(n^{1/3}+t+\log(8)\right)^2}{n},\,\frac{r\left(n^{1/3}+t+\log(8)\right)}{n}\right\}\,, & \text{if } r > n^{1/3}\,.
		                                                                        \end{cases}
	\end{align*}
\end{thm}
\begin{proof}
	The proof is similar to the proof of \ref{thm:matrix-concentration}. Let $X=\sum_{i=1}^n \left(Z_i Z_i^\T -\Sigma\right)$ and $U = \kappa_0^2 GG^\T$ for a parameter $\kappa_0>0$ and a standard Gaussian random vector $G\in \mbb R^d$ that is independent of everything else. We can apply \ref{lem:main-lemma} as in the proof of \ref{thm:matrix-concentration}; modification is only required to approximate the moments of $\left|\inp{U,X}\right|$. Using the standard symmetrization argument, for any $u\in \mbb R^d$ we have
	\begin{align*}
		\left(\E\left(\left|u^\T X u\right|^p\right)\right)^{1/p} & = \left(\E\left(\left|\sum_{i\in [n]}\left(\inp{u,Z_i}^2 - \norm{u}_\Sigma^2\right)\right|^p\right)\right)^{1/p} \\
		                                                          & \le 2 \left(\E\left(\left|\sum_{i\in [n]}\varepsilon_i \inp{u,Z_i}^2\right|^p\right)\right)^{1/p}\,,
	\end{align*}
	where $\varepsilon_1,\dotsc,\varepsilon_n$ are Rademacher random variables independent of each other and everything else. It then follows from \citep[Corollary 2]{Lat97} that for $p\ge 2$ we have
	\begin{align}
		\left(\E\left(\left|\sum_{i\in [n]}\varepsilon_i \inp{u,Z_i}^2\right|^p\right)\right)^{1/p} & \lesssim \sup_{q\in [\max\{2,p/n\},p]}\frac{p}{q}\left(\frac{n}{p}\right)^{1/q}\left(\E\left(\left|\inp{u,Z}\right|^{2q}\right)\right)^{1/q} \nonumber \\
		                                                                                            & \lesssim C_{n,p}\sup_{q\in [\max\{2,p/n\},p]}q^{-2}\left(\E\left(\left|\inp{u,Z}\right|^{2q}\right)\right)^{1/q} \,,\nonumber                          
	\end{align}
	where
	\begin{align*}
		C_{n,p} & \defeq \sup_{q\in[\max\{2,p/n\},p]}pq(n/p)^{1/q}\,.
	\end{align*}
	Using the assumption \eqref{eq:subexponential-marginals}, we obtain
	\begin{align*}
		\left(\E\left(\left|u^\T X u\right|^p\right)\right)^{1/p} & \lesssim  \eta^2 C_{n,p} \norm{u}_\Sigma^2\,.
	\end{align*}
	The function $q\mapsto pq\left(n/p\right)^{1/q}$ has at most one stationary point over $q>0$, which, if exists, is a minimum. Therefore, its supremum with respect to $q\in [\max\{2,p/n\},p]$ is attained at one of the endpoints of the interval. For $p>n$ we have $(p/n)(n/p)^{n/p} < p(n/p)^{1/p}<p$. Consequently, to approximate $C_{n,p}$ it suffices to consider $q=2$ and $q=p$ as the maximizing endpoints, which yields $C_{n,p}\lesssim \max\{2\sqrt{np},\,p^2\}$, and thus
	\begin{align*}
		\left(\E\left(\left|u^\T X u\right|^p\right)\right)^{1/p} & \lesssim \eta^2\max\left\{\sqrt{np},\,p^2\right\}\norm{u}_\Sigma^2 \,.
	\end{align*}

	It then follows from \ref{lem:moments-of-quadratic-form} that
	\begin{align*}
		\left(\E\left(\left|G^\T X G\right|^p\right)\right)^{1/p} & \lesssim \eta^2 \norm{\Sigma}_{\mr{op}} \max\{\sqrt{np},p^2\}\left(r+p\right)                \\
		                                                          & \lesssim \eta^2 \norm{\Sigma}_{\mr{op}} \max\{rn^{1/2}p^{1/2}, n^{1/2}p^{3/2},rp^2, p^3\}\,.
	\end{align*}

	We can again show \eqref{eq:matrix-probability-bound} still holds following the same argument used in the proof of \ref{thm:matrix-concentration}. Therefore, we have
	\begin{align*}
		\norm{X}_{\mr{op}} & \lesssim \eta^2 \norm{\Sigma}_{\mr{op}}  \kappa_0^2 e^{4/(3\kappa_0^2 p)} e^{(t+\log(8))/p}  \max\{rn^{1/2}p^{1/2}, n^{1/2}p^{3/2},rp^2, p^3\}
	\end{align*}
	which by optimizing with respect to $\kappa_0 > 0$ implies
	\begin{align*}
		\norm{X}_{\mr{op}} & \lesssim \eta^2 \norm{\Sigma}_{\mr{op}}  e^{(t+\log(8))/p} \max\{rn^{1/2}p^{-1/2}, n^{1/2}p^{1/2},rp, p^2\}\,.
	\end{align*}
	We consider two cases based on a comparison of $r$ and $n$. In the first case we have $r\le n^{1/3}$, which implies $r \le r^{2/5} n^{1/5} \le n^{1/3} \le r^{-2}n$. It is then straightforward to verify that
	\begin{align*}
		\max\{rn^{1/2}p^{-1/2}, n^{1/2}p^{1/2},rp, p^2\} & = \begin{cases}
			                                                     p^2\,,              & \text{for } p \ge n^{1/3}      \\
			                                                     n^{1/2}p^{1/2}\,,   & \text{for } r\le p \le n^{1/3} \\
			                                                     rn^{1/2}p^{-1/2}\,, & \text{for } 2\le p \le r\,.
		                                                     \end{cases}
	\end{align*}
	Therefore, choosing $p = r+t+\log(8)$ we have
	\begin{align}
		\norm{X}_{\mr{op}} & \lesssim \eta^2 \norm{\Sigma}_{\mr{op}} \max\left\{\left(r+t+\log(8)\right)^2,\,n^{1/2}\left(r+t+\log(8)\right)^{1/2}\right\}\,. \label{eq:bound-n>r^3}
	\end{align}
	Similarly, in the second case we have $r>n^{1/3}$, which implies $r > r^{2/5}n^{1/5} > n^{1/3} > r^{-2}n$. Then we can verify that
	\begin{align*}
		\max\{rn^{1/2}p^{-1/2}, n^{1/2}p^{1/2},rp, p^2\} & = \begin{cases}
			                                                     p^2\,,              & \text{for } p \ge r            \\
			                                                     rp\,,               & \text{for } n^{1/3}\le p \le r \\
			                                                     rn^{1/2}p^{-1/2}\,, & \text{for } p \le n^{1/3}\,.
		                                                     \end{cases}
	\end{align*}
	Choosing $p = n^{1/3}+t+\log(8)$ again we have
	\begin{align}
		\norm{X}_{\mr{op}} & \lesssim \eta^2 \norm{\Sigma}_{\mr{op}} \max\left\{\left(n^{1/3}+t+\log(8)\right)^2,\,r\left(n^{1/3}+t+\log(8)\right)\right\}\,. \label{eq:bound-n<r^3}
	\end{align}
	Recalling that $n^{-1}\sum_{i=1}^n Z_iZ_i^\T - \Sigma  = n^{-1}X$, the result follows from \eqref{eq:bound-n>r^3} and \eqref{eq:bound-n<r^3}.
\end{proof}
\subsection{A tail bound for the operator norm of random matrix series}\label{ssec:matrix-series}
A classical model studied in random matrix theory is the random matrix series model  
\begin{align}
    X & = \sum_{i=1}^n \xi_i A_i\,. \nonumber 
\end{align}
where the matrices $A_1,\dotsc,A_n \in \mbb R^{d_1\times d_2}$ are deterministic and the coefficients $\xi_1,\dotsc,\xi_n$ are random variables. In particular, if $\xi_i$s are i.i.d. standard Gaussian (or Rademacher) random variables the \emph{non-commutative Khintchine inequality} \citep{Lus86,LP91} provides a bound on the \emph{Schatten}-$q$ norms, including the operator norm, of $X$; in its prevalent formulation for $d\times d$ symmetric matrices $A_i$ (see, e.g., \citep[Corollary 2.4]{Tro18}) it guarantees that
\begin{align*}
    \E\norm{X}_{\mr{op}} & \lesssim \sqrt{\log d}\,\norm{\sum_{i=1}^n A_i^2}_{\mr{op}}^{1/2}\,.
\end{align*}
The suboptimal $O(\sqrt{\log(d)})$ multiplicative factor is eliminated recently in \citep*{BBvH23,BCSvH25} which, borrowing ideas from \emph{free probability}, derived a more refined inequality by showing that the spectrum of $X$ is closely approximated by the spectrum of a certain corresponding infinite-dimensional deterministic operator $X_{\mr{free}}$. In particular, \citet[Theorem 1.1]{BCSvH25} showed that
\begin{align}
    \left|\E\left(\norm{X}_{\mr{op}}\right) - \norm{X_{\mr{free}}}\right| & \lesssim \tilde{\upsilon}(X)(\log(d))^{3/4}\,, \label{eq:Gaussian-series-sharp}
\end{align}
where $\norm{X_{\mr{free}}} \le 2\norm{\sum_{i\in [n]} A_i^2}_{\mr{op}}^{1/2}$ and $\tilde{\upsilon}(X) = \norm{\sum_{i\in [n]} A_i^2}_{\mr{op}}^{1/4}\sup_{w\in \mbb R^n\st{}\norm{w}_2\le 1} \norm{\sum_{i=1}^n w_i A_i}_{\F}^{1/2}$. If $\norm{\sum_{i\in [n]}A_i^2}_{\mr{op}}^{1/2}\gg (\log(d))^{\beta}\sup_{w\in \mbb R^n\st{}\norm{w}_2\le 1} \norm{\sum_{i=1}^n w_i A_i}_{\F}$, for $\beta \ge 1$ the bound \eqref{eq:Gaussian-series-sharp} dominates the non-commutative Khintchine inequality, and further becomes dimension-free for $\beta \ge 3/2$. \citet{BvH24} further showed the \emph{universality} of these inequalities by considering $X$ to be a sum of independent random matrices and relating its spectrum to the spectrum of a Gaussian matrix whose mean and covariance operator match those of $X$. 

The following theorem considers a more general model that accommodates the cases where the coefficients $(\xi_i)_{i\in [n]}$ have heavier tail and perhaps are not even independent.

\begin{thm}\label{thm:matrix-series}
    Let $\xi \in \mbb R^n$ be a centered isotropic random vector such that for all $v\in \mbb R^n$ we have
    \begin{align}
        \left(\E\left(\left|\inp{\xi,v}\right|^p\right)\right)^{1/p} & \le h(p) \left(\E\left(|g|^p\right)\right)^{1/p} \norm{v}_2\,, \label{eq:xi-relative-profile}
    \end{align}
    where $g\sim \mr{Normal}(0,1)$ and the function $h:[1,\infty) \mapsto (0,\infty]$ is a moment profile of $\xi$ relative to the standard Gaussian. Furthermore, given matrices $A_1,\dotsc,A_n \in \mbb R^{d_1\times d_2}$ let
    \begin{align*}
        X & = \sum_{i=1}^n \xi_i A_i\,.
    \end{align*}
    For any $t\ge 0$, with probability at least $1-e^{-t}$ we have
    \begin{align}
        \norm{X}_{\mr{op}} & \lesssim \inf_{p\ge 2} e^{t/p} h(p) \left(\sigma_* p^{1/2} + \sigma + \upsilon + \sigma_{\diamond} p^{-1/2}\right)\,, \label{eq:matrix-series-tail-bound}
    \end{align}
    where
    \begin{align*}
        \sigma_*          & = \sup_{w\in \mbb R^n\st{}\norm{w}_2\le 1} \norm{\sum_{i=1}^n w_i A_i}_{\mr{op}}\,,              & \upsilon & = \sup_{w\in \mbb R^n\st{}\norm{w}_2\le 1} \norm{\sum_{i=1}^n w_i A_i}_{\F}\,, \\
        \sigma            & = \norm{\sum_{i=1}^n A_i^\T A_i}_{\mr{op}}^{1/2}+ \norm{\sum_{i=1}^n A_i A_i^\T}_{\mr{op}}^{1/2} &
        \sigma_{\diamond} & = \left(\sum_{i=1}^n \norm{A_i}_\F^2\right)^{1/2}\,.
    \end{align*}
\end{thm}

In the special case of symmetric Gaussian series matrices (i.e., $\xi_i\overset{\mr{i.i.d.}}{\sim}\mr{Normal}(0,1)$) we have $h(p) = 1$, and choosing $p = 2t+2\sigma_{\diamond} /\sigma_* \ge 2$ in \ref{thm:matrix-series} to obtain $\norm{X}_{\mr{op}} \lesssim  \sigma + \upsilon +  \sigma_{\diamond}^{1/2}\sigma_*^{1/2} + \sqrt{2t}\sigma_*$ with probability at least $1-e^{-t}$. The bound
\begin{align}
    \E\norm{X}_{\mr{op}}  & \lesssim  \sigma + \upsilon +  \sigma_{\diamond}^{1/2}\sigma_*^{1/2} \nonumber 
\end{align}
cab be extracted by integrating the tail bound (or directly applying the argument used to prove \ref{lem:main-lemma}). Using the PAC-Bayesian argument, \citet[Theorem 1.2]{Ade25} obtained a bound for the expected \emph{injective norm}\footnote{For a tensor $T\in \mbb R^{d_1\times\dotsb\times d_k}$ the injective norm of $T$ induced by the $q$-norm is $\norm{T}_{\vee_q}=\sup\{\langle u_1\otimes\dotsb\otimes u_k, T\rangle\st{} \forall i\in[k]\ u_i\in \mbb R^{d_i},\,\norm{u_i}_q \le 1\}$} of sub-Gaussian random tensor series (where $A_i$s are tensors and $\xi_i$s are sub-Gaussian). For matrices, the bound \eqref{eq:matrix-series-tail-bound} and that of \citep{Ade25} are the same up to constant factors. These bounds do not have the sub-optimal $\sqrt{\log d}$ multiplicative factor of the non-commutative Khintchine inequality, but they are generally weaker than the more refined inequality \eqref{eq:Gaussian-series-sharp}. For example, for series with symmetric matrices we may have
\begin{align*}
    \sigma_\diamond \sigma_* = \left(\sum_{i=1}^n \norm{A_i}_{\F}^2\right)^{1/2}\sup_{w\st\norm{w}_2\le 1}\norm{\sum_{i=1}^n w_i A_i}_{\mr{op}} & \gg \norm{\sum_{i=1}^n A_i^2}_{\mr{op}}^{1/2} \sup_{w\st\norm{w}_2\le 1}\norm{\sum_{i=1}^n w_i A_i}_{\F} = \frac{\sigma \upsilon}{2}\,,
\end{align*}
where the left-hand side can be as large as $\sqrt{d}$ times the right-hand side (e.g., for $n=d$ and $A_i= e_i\otimes e_i$ for all $i\in[n]$).

The advantage of \ref{thm:matrix-series} is that it addresses the situations where $\xi$ can have a general (relative) moment profile and does not impose the independence condition on the coordinates of $\xi$. For example, if $\xi$ is sub-exponential, even with independent coordinates, the standard PAC-Bayesian argument, as used in \citep{Ade25}, cannot be employed since the analysis involves random variables that do not have finite exponential moments.

\begin{proof}[Proof of \ref{thm:matrix-series}]
    For $i\in [2]$, let $G_i\in \mbb R^{d_i}$ be independent standard Gaussian random vectors. Furthermore, for parameter $\kappa_0>0$, let $\P_0$ to be the law of
    \begin{align*}
        U & = \kappa_0 G_1\otimes G_2\,.
    \end{align*}
    To apply \ref{lem:main-lemma} we first find an upper bound for $\left(\E_0\left(M_X(U,p)\right)\right)^{1/p}$. Let $X^\natural = \sum_{i\in[n]} \omega_i A_i$ where $\omega_i$s are i.i.d. standard Gaussian random variables.  Observe that under the assumption \eqref{eq:xi-relative-profile} we have $M_X(u,p)\le h^p(p) M_{X^\natural}(u,p)$ for every $u\in \mbb R^{d_1\times d_2}$. Therefore, we have
    \begin{align*}
        \left(\E_0\left(M_X(U,p)\right)\right)^{1/p} & \le h(p) \left(\E_0\left(M_{X^\natural}(U,p)\right)\right)^{1/p}              \\
                                                     & = h(p) \left(\E\left(\left|\inp{U,X^\natural}\right|^p\right)\right)^{1/p}\,.
    \end{align*}
    Let $A_{[n]}\in \mbb R^{d_1\times d_2\times n}$ be the tensor formed by stacking $A_i$s along a third axis. With $G_{3} = \bmx{\omega_1, &\dotsb, & \omega_n}^\T$ and $p\ge 2$, applying the bound on the moments of Gaussian chaoses due to \citet[Theorem 1]{Lat06} (see also \citep{Leh10}) to the Gaussian chaos $\inp{U,X^\natural} = \kappa_0\inp{G_1\otimes G_2 \otimes G_3,A_{[n]}}$ yields
    \begin{align*}
        \left(\E\left(\left|\inp{U,X^\natural}\right|^p\right)\right)^{1/p} & \lesssim \kappa_0 \left( \sigma_* p^{3/2} + (\sigma + \upsilon) p + \sigma_\diamond p^{1/2}\right)\,.
    \end{align*}
    Therefore, we have
    \begin{align}
        \left(\E_0\left(M_X(U,p)\right)\right)^{1/p} & \lesssim \kappa_0  h(p) \left( \sigma_* p^{3/2} + (\sigma + \upsilon) p + \sigma_\diamond p^{1/2}\right) \,. \label{eq:tensor-moment-bound}
    \end{align}

    Furthermore, for $x\in \mbb R^{d_1\times d_2}$ with $\norm{x}_{\mr{op}}=1$, let $(v_i\in \mbb R^{d_i})_{i\in [2]}$ be the top singular vectors of $x$ for which $\inp{v_1\otimes v_2,x} = 1$. Also, for $i\in[2]$ let $\tilde{G}_i = (I_{d_i} - v_i v_i^\T)G_i + \tilde{g}_i v_i$ where $I_{d_i}$ is the $d_i\times d_i$ identity matrix and $\tilde{g}_i$ is a standard Gaussian random variable independent of everything else. Clearly, $\tilde{G}_i\sim \mr{Normal}(0,I_{d_i})$ and $G_i - \tilde{G}_i \laweq \sqrt{2} g_i v_i$ where $\laweq$ denotes equivalence in distribution and $g_i\sim \mr{Normal}(0,1)$. Let $Z_1 = U$, $\tilde{Z}_1 = \kappa_0 \tilde{G}_1\otimes G_2$,
    \begin{align*}
        Z_2         & = \kappa_0 (G_1 - \tilde{G}_1) \otimes G_2 \,,           \\
        \tilde{Z}_2 & =  \kappa_0 (G_1 - \tilde{G}_1) \otimes \tilde{G}_{2}\,,
    \end{align*}
    and
    \begin{align*}
        Z_3 & = \kappa_0 (G_1 - \tilde{G}_1) \otimes (G_{2} - \tilde{G}_{2}) \\
            & \laweq 2\kappa_0 g_1g_2\, v_1\otimes v_2\,,
    \end{align*}
    and observe that $Z_{i+1} = Z_i - \tilde{Z}_i$. For each $i\in [2]$ we have $Z_i \laweq \tilde{Z}_i$ and thereby
    \begin{align*}
        2\P\left(\left|\inp{Z_i,x}\right| \ge 1\right) & = \P\left(\left|\inp{Z_i,x}\right| \ge 1\right) + \P\left(\left|\inp{\tilde{Z}_i,x}
        \right| \ge 1\right)                                                                                                                 \\
                                                       & \ge \P\left(\left|\inp{Z_i,x}\right| + \left|\inp{\tilde{Z}_i,x}
        \right| \ge 2\right)                                                                                                                 \\
                                                       & \ge \P\left(\left|\inp{Z_{i+1},x}\right| \ge 2\right)\,.
    \end{align*}
    Therefore, we obtain
    \begin{align*}
        \P\left(\left|\inp{Z_1,x}\right| \ge 1\right) & \ge 4^{-1}\P\left(\left|\inp{Z_{3},x}\right| \ge 4 \right)\,.
    \end{align*}
    Recalling the definitions of $Z_1$ and $Z_{3}$, and the fact that $\inp{v_1\otimes v_2,x} =\norm{x}_{\mr{op}} = 1$ by choice, it follows that
    \begin{align*}
        \P_0\left(\left|\inp{U,x}\right| \ge 1\right) & \ge 4^{-1}\P\left(\kappa_0 \left|g_1 g_2\right|\ge 2\right)                         \\
                                                      & \ge 4^{-1}\P\left(|g_i|\ge \sqrt{2}\kappa_0^{-1/2}\ \text{for all}\ i\in [2]\right) \\
                                                      & = \prod_{i\in [2]}\Phi(-\sqrt{2}\kappa_0^{-1/2})                                    \\
                                                      & \ge 4^{-2}e^{-8/(3\kappa_0)}  \,,
    \end{align*}
    where again we used \eqref{eq:Gaussian-CDF-bound} in the fourth line. Consequently $\nu(\P_0) \le 16 e^{8/(3\kappa_0)}$. In view of \eqref{eq:tensor-moment-bound}, it follows from \ref{lem:main-lemma} that with probability at least $1-e^{-t}$ we have
    \begin{align*}
        \norm{X}_{\mr{op}} & \lesssim \inf_{p\ge 2,\, \kappa_0 >0} e^{t/p} 16^{1/p}e^{8/(3p\kappa_0)}\kappa_0 h(p) \left( \sigma_* p^{3/2} + (\sigma + \upsilon) p + \sigma_\diamond p^{1/2}\right)\,.
    \end{align*}
    Simple calculus shows that the choice of $\kappa_0 = 8/(3p)$ is optimal. Evaluating the objective at this value yields \eqref{eq:matrix-series-tail-bound}.
\end{proof}
\section{Further Abstraction via Coupling}\label{sec:coupling-formulation}
The argument behind \ref{lem:main-lemma}, due to its simplicity, can be applied in a much broader scope. Instead of bounding norms of random vectors or matrices, in principle, we can derive tail bounds or moment bounds for rather complicated functions of a random variable, e.g., supremum over certain class of functionals of a random variable.
The facts that $\norm{X}$ is compared with a linear form $\inp{U,X}$, and $U$ is taken to be independent of $X$ are other implicit limitations of \ref{lem:main-lemma}. The following lemma avoids such limitations and provides a more abstract generalization of \ref{lem:main-lemma} by expressing the bounds in terms of a coupling with $X$. In particular, it provides an \emph{exact} variational formulation of the moments of interest.

\begin{s-lem}\label{lem:coupling-abstraction}
Let $F\st \mbb R^d \to \mbb R$ be a function that has a finite moment of order $p$ with respect to the random variable $X\in \mbb R^d$. Consider a coupling $(X,Y)$ where the random variable $Y\in \mbb R$, which may depend on $X$, is such that $(Y)_+$ has a finite moment. With $\mr{ess\,supp}(X)$ denoting the essential support of $X$, define
\begin{align}
	\nu\left(\P\left(Y\,\mid X\right)\right) = \nu_F\left(\P\left(Y\,\mid X\right)\right) & \defeq \sup_{x\in \mr{ess\,supp}(X)} \frac{1}{\P\left(Y\ge F(X)\,\mid X= x\right)} \label{eq:nu_F}
\end{align}
Then, for any $p\ge 1$ we have
\begin{align}
	\left(\E\left(\left(F(X)\right)_+^p\right)\right)^{1/p} & = \inf_{\P\left(Y\,\mid X\right)} \left(\E\left((Y)^p_+\right)\right)^{1/p} \nu^{1/p}\left(\P\left(Y\,\mid X\right)\right)\,.\label{eq:coupling-moments-variational}
\end{align}
Furthermore, for any $t\ge 0$ with probability at least $1-e^{-t}$ we have
\begin{align}
	F\left(X\right) & \le \inf_{\substack{\P\left(Y\mid X\right) \\ b\in \mbb R\\ p\ge 1
		                      }}b + e^{t/p}\left(\E\left((Y-b)_+^p\right)\right)^{1/p} \nu^{1/p}\left(\P\left(Y\,\mid X\right)\right)\,. \label{eq:coupling-tail-bound}
\end{align}
More precisely, for any $s\in \mbb R$ we have the following inequality
\begin{align}
	\P\left(F(X)\ge s\right) & \le \nu\left(\P\left(Y\st[\mid]X\right)\right)\P\left(Y \ge s\right)\,, \label{eq:tail-conversion}
\end{align}
using which the quantiles of $F(X)$ can be bounded in terms of the quantiles of $Y$.
\end{s-lem}

\begin{proof}
	By Markov's inequality we have
	\begin{align*}
		\P\left(Y\ge F(X)\,\mid X \right) & \le \P\left((Y-b)_+\ge \left(F(X)-b\right)_+\,\mid X \right)            \\
		                                  & \le \frac{\E\left((Y-b)_+^p\,\mid X\right)}{\left(F(X)-b\right)_+^p}\,,
	\end{align*}
	and thus
	\begin{align*}
		\left(F(X)-b\right)_+^p & \le \frac{\E\left((Y-b)_+^p\,\mid X\right)}{\P\left(Y\ge F(X)\,\mid X \right)}                                   \\
		                        & \le \frac{\E\left((Y-b)_+^p\,\mid X\right)}{\inf_{x\in \mr{ess\, supp}(X)}\P\left(Y\ge F(X)\,\mid X = x \right)} \\
		                        & = \E\left((Y-b)_+^p\,\mid X\right)\nu\left(\P\left(Y\,\mid X\right)\right)\,.
	\end{align*}
	Taking the expectation with respect to $X$ we obtain
	\begin{align}
		\E\left(\left(F(X)-b\right)_+^p\right) & \le \E\left((Y-b)_+^p\right) \nu\left(\P\left(Y\,\mid X\right)\right)\,.\label{eq:coupling-truncated-moment-bound}
	\end{align}
	Then, \eqref{eq:coupling-moments-variational} follows by setting $b=0$ and taking the infimum with respect to $\P(Y\,\mid X)$ and observing that for $Y = F(X)$ we have $\nu\left(\P(Y\,\mid X)\right) = 1$. The tail bound
	\eqref{eq:coupling-tail-bound} follows from \eqref{eq:coupling-truncated-moment-bound}, Markov's inequality, and the inequality $F(X) \le \left(F(X)-b\right)_+ + b$.

	To prove \eqref{eq:tail-conversion}, observe that
	\begin{align*}
		\P\left(Y\ge s\right) & = \E\left(\bbone\left(Y \ge s\right)\right)                                               \\
		                      & \ge \E\E\left(\bbone\left(Y \ge F(X)\right)\bbone\left(F(X)\ge s\right)\st[\mid] X\right) \\
		                      & = \E\left(\P\left(Y\ge F(X)\st[\mid] X\right)\bbone\left(F(X)\ge s\right)\right)          \\
		                      & \ge \P\left(F(X)\ge s\right)/\nu\left(\P\left(Y\st[\mid] X\right)\right)\,,
	\end{align*}
	multiplying both sides by $\nu\left(\P\left(Y\st[\mid] X\right)\right)$ yields the claimed inequality.
\end{proof}

While the choice of $Y = F(X)$ attains the infimum in \eqref{eq:coupling-moments-variational}, interesting situations occur when a nearly optimal bound can be achieved by restricted choices of the coupling $(X,Y)$. An obvious practical requirement for suitable couplings is that $\E\left(Y^p\right)$ and $\nu\left(\P\left(Y\,\mid X\right)\right)$ must be easy-to-approximate. For example, \ref{lem:main-lemma}, as a special case of \ref{lem:coupling-abstraction} with $F\left(X\right)= \norm{X}$, only considers $Y = \left|\inp{U,X}\right|$ where $U\sim \P_0$ is independent of $X$. A more refined choice is a \emph{Gibbs measure} with a suitable potential function as the conditional law of $U$ given $X$. It is also worth mentioning that \ref{lem:coupling-abstraction} uses a slightly refined definition of $\nu\left(\P_0\right)$ of \ref{lem:main-lemma} in cases where $X/\norm{X}$ is (almost surely) supported on a \emph{proper} subset of $\partial \mc B$.

We can also use a similar argument to bound the expectation of supremum over functions of $X$ that belong to a prescribed class of functions. The following proposition provides such a bound.
\begin{prop}\label{prop:pseudo-PAC-Bayes}
	Given an index set $\Theta$ let $\left(F_\theta\st{}\mbb R^d \to \mbb R\right)_{\theta\in \Theta}$ be a set of functions that each have a finite moment with respect to the random variable $X\in \mbb R^d$. With $(X,Y)\in \mbb R^d\times \mbb R$ being a coupling induced by the conditional distribution $\P\left(Y\st[\mid] X\right)$ such that $(Y)_+$ has a finite moment, and recalling the definition of $\nu_{F}(\cdot)$ in \eqref{eq:nu_F}, we have
	\begin{align}
		\E\left(\sup_{\theta \in \Theta} F_\theta(X)\right) & \le \inf_{p\ge 1,\,\P\left(Y\st[\mid] X \right)} \left(\E\left((Y)_+^p\right)\right)^{1/p}\sup_{\theta\in \Theta}\nu_{F_\theta}^{1/p}(\P\left(Y\st[\mid] X\right))\,. \label{eq:pseudo-PAC-Bayes}
	\end{align}
\end{prop}
\begin{proof}
	Following the same argument as in the proof of \ref{lem:coupling-abstraction}, for all $\theta \in \Theta$ we have
	\begin{align*}
		\left(F_\theta(X)\right)_+^p & \le \E\left(\left(Y\right)_+^p\st[\mid] X\right) \nu_{F_\theta}\left(\P\left(Y\st[\mid]X\right)\right)\,.
	\end{align*}
	Taking the $p$-th root on both sides and then maximizing with respect to $\theta \in \Theta$ we get
	\begin{align*}
		\sup_{\theta \in \Theta }\left(F_\theta(X)\right)_+ & \le \left(\E\left(\left(Y\right)_+^p\st[\mid] X\right)\right)^{1/p} \sup_{\theta\in \Theta}\nu^{1/p}_{F_\theta}\left(\P\left(Y\st[\mid]X\right)\right)\,.
	\end{align*}
	Then, \eqref{eq:pseudo-PAC-Bayes} follows by taking the expectation with respect to $X$, applying Jensen's inequality for the concave function $z\mapsto z^{1/p}$, and using the fact that $\sup_{\theta\in \Theta} (F_\theta(x))_+ \ge \sup_{\theta\in \Theta} F_\theta(x)$ for all $x\in \mbb R^d$.
\end{proof}

The following corollary, considers a situation where the bound above leads to a more recognizable PAC-Bayesian-style bound in which the corresponding $\nu_{F_\theta}\left(\P\left(Y\st[\mid] X\right)\right)$ is approximated by a certain combination of the KL divergence and the R\'{e}nyi divergence (see, e.g., \citep[Section 3.2.3]{RS13}, \citep[Section 7.12]{PW24}, \citep{Ana17}, and \citep{vEH14} for its properties and relation with the KL divergence) between the prior and posterior measures. Recall that for two probability measures $\mu$ and $\mu_{\mr{ref}}$ with $\mu\ll \mu_{\mr{ref}}$ the R\'{e}nyi divergence of order $\alpha\in \mbb R_{>0} \backslash\{1\}$ of $\mu$ relative to $\mu_{\mr{ref}}$ is
\begin{align*}
	D_{\alpha}\left(\mu,\mu_{\mr{ref}}\right) & = \frac{1}{\alpha-1}\log \E_{\mu_{\mr{ref}}}\left(\left(\frac{\d\mu}{\d\mu_{\mr{ref}}}\right)^\alpha\right)\,.
\end{align*}
\begin{cor}\label{cor:Renyi-divergence}
	Let $X\sim \mu_X$ be a random variable in $\mbb R^d$, $\mc M$ be a set of probability measures on some sample space $\mc Z$, and $f\st{}\mbb R^d\times \mc Z \to \mbb R$ be a function that for each $\mu \in \mc M$ has a finite $L^p(\mu_X \times \mu)$ norm for some $p\ge 1$. Denote the median\footnote{We chose the median instead of any other fixed quantile for simplicity and to avoid introducing extra parameters.} of $f(x,Z)$ with respect to $Z\sim \mu$ by
	\begin{align*}
		F_{\mu}(x) = \med_{Z\sim \mu}\left(f(x,Z)\right)\,.
	\end{align*}
	Then, for any reference measure $\mu_{\mr{ref}}\in \mc M$ and $\alpha>1$ we have
	\begin{align*}
		\E_{\mu_X}\left(\sup_{\mu \in \mc M\st{}\mu \ll \mu_{\mr{ref}}}\hspace{-2ex}F_{\mu}(X)\right) & \le \inf_{p\ge 1} \left(\left(\E_{\mu_X \times \mu_{\mr{ref}}}\left(\left(f(X,Z)\right)_+^p\right)\right)^{1/p} \sup_{\mu \in \mc M\st{}\mu \ll \mu_{\mr{ref}}}\hspace{-2ex}e^{\hat{D}_{\alpha,\mr{KL}}(\mu,\mu_{\mr{ref}})/p}\right)\,,
	\end{align*}
	where
	\begin{align*}
		\hat{D}_{\alpha,\mr{KL}}(\mu,\mu_{\mr{ref}}) & = \min\left\{D_\alpha(\mu,\mu_{\mr{ref}}) + \frac{\alpha}{\alpha - 1}\log(2),\,2D_{\mr{KL}}(\mu,\mu_{\mr{ref}}) + 2\log(2) \right\}\,.
	\end{align*}
\end{cor}
\begin{proof}
	For any $\mu_{\mr{ref}}$-measurable event $\mc E$, it follows from the definition of the R\'{e}nyi divergence for $\alpha>1$ and H\"{o}lder's inequality that
	\begin{align*}
		\mu(\mc E) & = \E_{\mu_{\mr{ref}}}\left(\frac{\d\mu}{\d\mu_{\mr{ref}}}\cdot\bbone_{\mc E}\right)                                                                                                                \\
		           & \le \left(\E_{\mu_{\mr{ref}}} \left(\left(\frac{\d \mu}{\d \mu_{\mr{ref}}}\right)^\alpha\right)\right)^{1/\alpha} \left(\E_{\mu_{\mr{ref}}}\left(\bbone_{\mc E}\right)\right)^{(\alpha -1)/\alpha} \\
		           & = e^{(\alpha -1)D_\alpha(\mu,\mu_{\mr{ref}})/\alpha}\left(\mu_{\mr{ref}}(\mc E)\right)^{(\alpha -1)/\alpha}\,.
	\end{align*}
	In particular, for  $\mc E = {\mc E}_x = \{f(x,Z)\ge F_{\mu}(x)\}$, we get
	\begin{align}
		\P_{Z\sim \mu_{\mr{ref}}}\left(f(x,Z)\ge F_{\mu}(x)\right) & \ge \left(\P_{Z\sim \mu}\left(f(x,Z)\ge F_{\mu}(x)\right)\right)^{\alpha/(\alpha-1)} e^{-D_{\alpha}(\mu,\mu_\mr{ref})} \nonumber \\
		                                                           & \ge 2^{-\alpha/(\alpha-1)}e^{-D_{\alpha }(\mu,\mu_{\mr{ref}})}\,, \label{eq:lower-bound-by-Renyi}
	\end{align}
	where the second line follows from the definition of $F_{\mu}(\cdot)$.

	We can obtain an alternative lower bound for $\mu_{\mr{ref}}(\mc E) = \P_{Z\sim \mu_{\mr{ref}}}\left(f(x,Z)\ge F_{\mu}(x)\right)$ in terms of the KL divergence between $\mu$ and $\mu_{\mr{ref}}$. Recalling the choice of $\mc E$, it follows from the \emph{data processing inequality} (see, e.g., \citep[Theorem 7.4]{PW24}) that
	\begin{align*}
		D_{\mr{KL}}(\mu,\mu_{\mr{ref}}) & \ge D_{\mr{KL}}(\mr{Bernoulli}(\mu(\mc E)), \mr{Bernoulli}(\mu_{\mr{ref}}(\mc E)))            \\
		                                & =  -\frac{1}{2}\log\left(4\mu_{\mr{ref}}(\mc E)\left(1-\mu_{\mr{ref}}(\mc E)\right)\right)\,,
	\end{align*}
	or equivalently
	\begin{align*}
		\left|1-2\mu_{\mr{ref}}(\mc E)\right| & \le \sqrt{1-e^{-2D_{\mr{KL}}(\mu,\mu_{\mr{ref}})}}\,,
	\end{align*}
	from which we can extract the one-sided inequality
	\begin{align}
		\P_{Z\sim \mu_{\mr{ref}}}\left(f(x,Z)\ge F_{\mu}(x)\right) & \ge \frac{1}{2}\left(1-\sqrt{1-e^{-2D_{\mr{KL}}(\mu,\mu_{\mr{ref}})}}\right) \nonumber \\
		                                                           & \ge \frac{1}{4}e^{-2D_{\mr{KL}}(\mu,\mu_{\mr{ref}})}\,. \label{eq:lower-bound-by-KL}
	\end{align}

	Combining \eqref{eq:lower-bound-by-Renyi} and \eqref{eq:lower-bound-by-KL} we deduce
	\begin{align*}
		\nu_{F_\mu}\left(\P\left(Y\st[\mid]X\right)\right) \le \min\left\{2^{\alpha/(\alpha-1)}e^{D_{\alpha }(\mu,\mu_{\mr{ref}})},\,4e^{2D_{\mr{KL}}(\mu,\mu_{\mr{ref}})} \right\}= e ^{\hat{D}_{\alpha,\mr{KL}}(\mu,\mu_{\mr{ref}})}\,.
	\end{align*}
	The claim follows by applying \ref{prop:pseudo-PAC-Bayes} with $Y = f(X,Z)$ for $Z\sim \mu_{\mr{ref}}$ and treating $\mu$ as the index for $F_{\mu}(\cdot)$.
\end{proof}

Below we provide a simple application of \ref{prop:pseudo-PAC-Bayes}.
\begin{exmp}\label{exmp:L-inf-non-iid-Gussians}
	Given $\sigma_1\ge \dotsc\ge \sigma_d>0$, for $i=1,\dotsc,d$ let $X_i\sim\mr{Normal}(0,\sigma_i^2)$ be independent centered Gaussian random variables. Our goal is to bound $\E\left(\norm{X}_\infty\right)$ where $X = \bmx{X_i}_{i=1}^d\in \mbb R^d$. Let $\Theta = [d]$ and for $\theta \in \Theta$ define $F_\theta\st{}\mbb{R}^d\to \mbb R$ as $F_\theta(x) = |x_\theta| - b$ for some parameter $b\ge 0$. Furthermore, take $J$ to be uniformly distributed over $[d]$ independent of everything else and let $Y = |X_J| - b$. We have $\nu_{F_\theta}(\P\left(Y\mid\, X\right))\le d$ and it follows from \ref{prop:pseudo-PAC-Bayes} that
	\begin{align*}
		\E\left(\max_{i\in [d]} |X_i| - b\right) & \le d^{1/p} \left(\E\left((Y)_+^p\right)\right)^{1/p}                        \\
		                                         & = \left(\sum_{j=1}^d \E\left(\left(|X_j|-b\right)_+^p\right)\right)^{1/p}\,.
	\end{align*}
	Each of the summands on the right-hand side can be bounded as
	\begin{align*}
		\E\left(\left(|X_j|-b\right)_+^p\right) & = \int_0^\infty \P\left(|X_j| \ge s + b\right) p s^{p-1}\d s                                 \\
		                                        & \le \int_0^\infty \inf_{\lambda\ge 0} 2e^{\lambda^2 \sigma_i^2/2-\lambda(s+b)} p s^{p-1}\d s \\
		                                        & = \int_0^\infty 2e^{-(s+b)^2/(2\sigma_i^2)} p s^{p-1}\d s                                    \\
		                                        & \le 2\sigma_i^p e^{-b^2/(2\sigma_i^2)}\left(\frac{p}{2}\right)!\,.
	\end{align*}
	Combining the derived inequalities and optimizing with respect to $p\ge 1$ we obtain
	\begin{align*}
		\E\left(\norm{X}_\infty\right) & \le b + \inf_{p\ge 1} \sqrt{2p}\left(\sum_{i=1}^d \sigma_i^p  e^{-b^2/(2\sigma_i^2)}\right) ^{1/p}\,.
	\end{align*}
	Since the optimal choice of $b\ge 0$  and $p\ge 1$ cannot be expressed explicitly as a function of the $\sigma_i$s, we resort to approximation. Let
	\begin{align*}
		\sigma_* & = \max_{i\in[d]} \sigma_i \sqrt{\log(i+1)}\,,
	\end{align*}
	and choose $b = c\sigma_*$ for some coefficient $c>0$ to obtain
	\begin{align*}
		\E\left(\norm{X}_\infty\right) & \le c\sigma_* + \inf_{p\ge 1} \sqrt{2p}\sigma_1\left(\sum_{i=1}^d \left(i+1\right)^{-c^2/2}\right) ^{1/p}\,.
	\end{align*}
	In particular, for $c=2$ we have
	\begin{align*}
		\E\left(\norm{X}_\infty\right) & < 2\sigma_* + \sqrt{2}\sigma_1\,,
	\end{align*}
	which, in view of the fact that $\sigma_1 \le \sigma_*/\sqrt{\log 2}$, coincides with the tight estimate of $\E\left(\norm{X}_\infty\right)$ (see, e.g., \citep[Section 2.1]{vHan17}). Applying the Gaussian concentration inequality we also obtain the moment and tail bounds for $\norm{X}_\infty$.
\end{exmp}
\appendix
\crefalias{section}{appendix}
\section{Recovering \ref{exmp:polyhedral-Gaussians} and \eqref{eq:subGauss-Euclidean} via \ref{thm:gaussian-pushforward}}\label{apx:examples-via-pushforward}
We can recover the bound \eqref{eq:polyhedral-Gaussian-simplified} in \ref{exmp:polyhedral-Gaussians} from \ref{thm:gaussian-pushforward} up to a factor of order $\sqrt{\log c_k}$ that can be ignored if $c_k$ is not large relative to $\pi_k^{-1}$. The parameters of the problem in this example are $\eta_1=\sqrt{2}$, $\eta_2 = 0$, and $\Sigma = I$. By taking $\kappa_0 \to \infty$, we have $f_0(x) = \rho_0 \norm{x}$ which corresponds to the choice
\begin{align*}
	U = \nabla f_0 (G) & = \argmax_{u\in \rho_0 \{\pm u_1,\dotsc, \pm u_N\}} \inp{u, G}\,,
\end{align*}
as considered in \ref{exmp:polyhedral-Gaussians}. Recalling the definitions \eqref{eq:exmp1-c_k} and \eqref{eq:exmp1-pi_k}, we focus on the case $\rho_0 = 2c_k$ for some  $k\in[N]$. Because
\begin{align*}
	\norm{\Sigma}_{\square} & = \sup_{u\in \mc B_*} u^\T \Sigma u = \sup_{u\in \mc B_*} \norm{u}_2^2  = \max_{i\in[N]} \norm{u_i}_2^2\,,
\end{align*}
we have
\begin{align*}
	\Omega_{\Sigma}(p,\infty,2c_k) & \le \eta_1 \sqrt{p} \rho_0 \norm{\Sigma}_\square^{1/2}= 2\sqrt{2p} c_k \max_{i\in [N]}\norm{u_i}_2^2\,.
\end{align*}
Next, we bound $\bar{\nu}_{\Sigma}(\infty,2c_k)\le \rho_0/\ubar{\lambda}_{\Sigma}(\infty,2c_k)$. Let $I_k(x) =\left\{i\in[N]\st c_k|\inp{u_i,x}| \ge 1 \right\}$. We have
\begin{align*}
	\ubar{\lambda}_{\Sigma}(\infty,2c_k) & =  \inf_{x\in \partial B} \sum_{i\in [N]} 2\P_0\left(U = 2c_k u_i\right) \left(2c_k\left|\inp{u_i,x}\right|-1\right)_+ \\
	                                     & \ge \inf_{x\in \partial B} \sum_{i\in I_k(x)} 2\P_0\left(U = 2c_k u_i\right)c_k\left|\inp{u_i,x}\right|                \\
	                                     & \ge 2\pi_k\,,
\end{align*}
using which we obtain the bound
\begin{align*}
	\bar{\nu}_{\Sigma}(\infty, 2c_k) & \le \frac{c_k}{\pi_k} \,.
\end{align*}
Therefore, the bound \eqref{eq:G-pushforward} provided by \ref{thm:gaussian-pushforward} implies
\begin{align*}
	\norm{X} & \le 2\max_{i\in[N]}\norm{u_i}^2_2 \inf_{p\ge 1,\, k\in[N]}e^{t/(2p)} \sqrt{2p}c_k\left(\frac{c_k}{\pi_k}\right)^{1/(2p)}\\
	         & \le \sqrt{8e} \max_{i\in [N]}\norm{u_i}_2^2 \min_{k\in [N]} c_k\max\left\{1,\sqrt{t-\log(\pi_k) + \log(c_k)}\right\}\,.
\end{align*}
Let $k_\*\in [N]$ be the minimizer of $\min_{k\in [N]} c_k\max\left\{1,\sqrt{t-\log \pi_k}\right\}$ in \eqref{eq:polyhedral-Gaussian-simplified}. If we have $\log c_{k_\*} \lesssim t- \log \pi_{k_\*}$, our derivations confirm that, up to constant factors, \ref{thm:gaussian-pushforward} recovers the bound \eqref{eq:polyhedral-Gaussian-simplified} of \ref{exmp:polyhedral-Gaussians}.

Next we show that the tail bound \eqref{eq:subGauss-Euclidean}, which applies when $X$ has sub-Gaussian marginals, can be recovered from \ref{thm:gaussian-pushforward}. We have $\eta_1 =\eta$ and again $\eta_2 = 0$. Also, recall that in this example $\mc B$ and $\mc B_*$ coincide with the unit $\ell_2$-ball. By taking $\rho_0 \to +\infty$ we have $f_0(x) = \kappa_0\norm{x}_{\Sigma^{-1/2}}^2/2$, and thus
\begin{align*}
	U = \nabla f_0(\Sigma^{1/2}G) & = \kappa_0 G\,.
\end{align*}
Furthermore, because $\norm{G}_*$ has a sub-Gaussian tail we have $\lim_{\rho_0 \to +\infty} T_{\geqslant}(\rho_0/\tau \kappa_0) \rho_0 = 0$. Therefore, choosing $\tau=1$ in the definition of $\Omega_{\Sigma}(p,\kappa_0,\rho_0)$ we obtain
\begin{align*}
	\Omega_{\Sigma}(p,\kappa_0,\infty) & = \eta \sqrt{p}\kappa_0 \left(\sqrt{\tr(\Sigma)} + \sqrt{2p\norm{\Sigma}_{\mr{op}}}\right)\,.
\end{align*}
Furthermore, since $\inp{G,x}$ has the standard Gaussian distribution we can write
\begin{align*}
	\ubar{\lambda}_{\Sigma}(\kappa_0,\infty)  = \E\left(\left(\kappa_0\left|\inp{G,x}\right|-1\right)_+\right) & = \frac{2}{\sqrt{2\pi}}\int_{1/\kappa_0}^\infty (\kappa_0 z - 1)e^{-z^2/2}\d z                                            \\
	                                                                                                           & \ge \frac{2}{\sqrt{2\pi}}\int_{1/\kappa_0}^\infty (\kappa_0 z - 1)e^{-\left((z-\kappa_0^{-1})^2+\kappa_0^{-2}\right)}\d z \\
	                                                                                                           & = \frac{2\kappa_0}{\sqrt{2\pi}}e^{-\kappa_0^{-2}}\int_0^\infty se^{-s^2}\d s                                              \\
	                                                                                                           & = \frac{\kappa_0}{\sqrt{2\pi}}e^{-\kappa_0^{-2}}\,
\end{align*}
where we used the inequality $z^2/2 \le (z-\kappa_0^{-1})^2 + \kappa_0^{-2}$ on the second line, and the change of variable $s = z - \kappa_0^{-1}$ on the fourth line. Applying the inequality above in the definition of $\bar{\nu}_{\Sigma}(\kappa_0,\infty)$ yields
\begin{align*}
	\bar{\nu}_\Sigma(\kappa_0, \infty) & \le 2\pi e^{1/\kappa_0^2}+1< 8 e^{1/\kappa_0^2}\,.
\end{align*}
Therefore, the bound \eqref{eq:G-pushforward} of \ref{thm:gaussian-pushforward} implies
\begin{align*}
	\norm{X} = \norm{X}_2 & \le \inf_{p\ge 1,\, \kappa_0> 0 } e^{t/(2p)}\eta\sqrt{p}\kappa_0 \left(\sqrt{\tr(\Sigma)} + \sqrt{2p\norm{\Sigma}_{\mr{op}}}\right)8^{1/(2p)}e^{1/(2p\kappa_0^2)}\\
	                      & = \sqrt{e}\eta \inf_{p\ge 1 } e^{(t+\log 8)/(2p)}\left(\sqrt{\tr(\Sigma)} + \sqrt{2p\norm{\Sigma}_{\mr{op}}}\right) \\
	                      & \le 4\sqrt{e}\eta \left(3\tr(\Sigma) + \frac{2}{3\log 2}\norm{\Sigma}_{\mr{op}}t\right)^{1/2}\,,
\end{align*}
where the third line follows by evaluating the argument of the infimum at $p = 1 + t/\log 8$, the Cauchy--Schwarz inequality, and the fact that $\norm{\Sigma}_{\mr{op}} \le \tr(\Sigma)$. Therefore, up to the constant factors, we have recovered the bound \eqref{eq:subGauss-Euclidean} through \ref{thm:gaussian-pushforward}.

\section{Supporting Lemmas}\label{apx:supporting-lemmas}
\begin{lem}\label{lem:moments-of-quadratic-form}
	Let $B\in\mbb R^{\ell\times \ell}$ be a symmetric matrix and $G\in \mbb R^\ell$ be a standard Gaussian random variable. Then, for $p\ge 1$ we have
	\begin{align}
		\E\left(|G^\T B G|^p\right) & \le \norm{B}_*^p \prod_{i=1}^{\lfloor p\rfloor}\left(1 + \frac{2(p-i)\norm{B}_{\mr{op}}}{\norm{B}_*}\right)\label{eq:Gaussian-moment-inequality}\,,
	\end{align}
	where $\norm{B}_*$ denotes the nuclear norm of $B$. The inequality above can be simplified to
	\begin{align*}
		\E\left(|G^\T B G|^p\right) & \le \left(\norm{B}_* + p\norm{B}_{\mr{op}}\right)^p\,.
	\end{align*}
\end{lem}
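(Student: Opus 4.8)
The plan is to reduce to the positive semidefinite case, prove a single ``moment recursion'' by Gaussian integration by parts, iterate it $\lfloor p\rfloor$ times, and finish with Jensen's inequality and AM--GM. First I would diagonalize $B = \sum_i \lambda_i v_i v_i^\T$; then the coordinates $g_i \defeq v_i^\T G$ are i.i.d.\ standard Gaussians, $G^\T B G = \sum_i \lambda_i g_i^2$, and the pointwise inequality $\left|\sum_i \lambda_i g_i^2\right| \le \sum_i |\lambda_i| g_i^2 = G^\T |B| G$ with $|B| \defeq \sum_i |\lambda_i| v_i v_i^\T$ gives $\E(|G^\T B G|^p) \le \E\bigl((G^\T|B|G)^p\bigr)$. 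Since $|B| \succeq 0$ with $\norm{|B|}_* = \norm{B}_*$ and $\norm{|B|}_{\mr{op}} = \norm{B}_{\mr{op}}$, it suffices to prove the claim for $B \succeq 0$; I then write $S \defeq G^\T B G = \sum_i \lambda_i g_i^2 \ge 0$, note $\norm{B}_* = \tr(B) = \E(S)$, dispose of the trivial case $B=0$, and assume $\norm{B}_* > 0$.

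The heart of the argument is the recursion $\E(S^p) \le \bigl(\norm{B}_* + 2(p-1)\norm{B}_{\mr{op}}\bigr)\,\E(S^{p-1})$, valid for $p \ge 1$. To obtain it, I would apply the Gaussian integration-by-parts identity $\E(g_i\,\varphi(G)) = \E(\partial_i \varphi(G))$ with $\varphi(G) = g_i S^{p-1}$, which yields $\E(g_i^2 S^{p-1}) = \E(S^{p-1}) + 2(p-1)\lambda_i\,\E(g_i^2 S^{p-2})$; multiplying by $\lambda_i$ and summing over $i$, the left side becomes $\E(S\cdot S^{p-1}) = \E(S^p)$, the first right-hand term becomes $\tr(B)\,\E(S^{p-1}) = \norm{B}_*\,\E(S^{p-1})$, and for the remaining term I would use the pointwise bound $\sum_i \lambda_i^2 g_i^2 \le \norm{B}_{\mr{op}} \sum_i \lambda_i g_i^2 = \norm{B}_{\mr{op}} S$ together with $p\ge 1$ (so $2(p-1)\ge 0$). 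The main obstacle is justifying the integration by parts when $1 \le p < 2$, since then $S^{p-1}$ fails to be $C^1$ at $S = 0$ and $S^{p-2}$ is singular there; I would handle this by running the computation with $S$ replaced by $S+\varepsilon$ and letting $\varepsilon \downarrow 0$, which is legitimate because $S$ has finite moments of every order and $S > 0$ almost surely.

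Iterating the recursion $m \defeq \lfloor p \rfloor$ times brings the exponent down to $p - m \in [0,1)$ and gives $\E(S^p) \le \E(S^{p-m})\prod_{i=1}^{m}\bigl(\norm{B}_* + 2(p-i)\norm{B}_{\mr{op}}\bigr)$, where the factors are nonnegative since $p-i \ge p-m \ge 0$. As $z \mapsto z^{p-m}$ is concave on $[0,\infty)$, Jensen's inequality gives $\E(S^{p-m}) \le (\E S)^{p-m} = \norm{B}_*^{p-m}$, and absorbing $\norm{B}_*^{p-m}$ into the product produces exactly \eqref{eq:Gaussian-moment-inequality}. For the simplified bound I would apply AM--GM to the $m$ positive factors: their arithmetic mean equals $\norm{B}_* + \tfrac{1}{m}\bigl(\sum_{i=1}^{m} 2(p-i)\bigr)\norm{B}_{\mr{op}} = \norm{B}_* + (2p - m - 1)\norm{B}_{\mr{op}}$, and since $p \ge 1$ forces $0 \le 2p - m - 1 < p$, this mean is at most $\norm{B}_* + p\norm{B}_{\mr{op}}$; combining $\prod_{i=1}^m(\cdots) \le (\norm{B}_* + p\norm{B}_{\mr{op}})^m$ with $\norm{B}_*^{p-m} \le (\norm{B}_* + p\norm{B}_{\mr{op}})^{p-m}$ yields $\E(S^p) \le (\norm{B}_* + p\norm{B}_{\mr{op}})^p$.
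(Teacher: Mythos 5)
Your proof is correct and takes essentially the same approach as the paper: reduce to the PSD case via $|B| = B_+ + B_-$, derive a one-step moment recursion by Gaussian integration by parts, iterate $\lfloor p\rfloor$ times, and finish with Jensen's inequality and AM--GM. The only substantive difference is that you explicitly flag and handle the regularity issue for $1 \le p < 2$ (where $S^{p-2}$ is singular at $S=0$) via a $S+\varepsilon$ limiting argument, a technicality the paper leaves implicit.
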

\begin{proof}
	With $B_+$ and $B_-$ respectively denoting the positive semidefinite parts of $B$ and $-B$, let $B_\natural = B_+ + B_-$ which is a positive semidefinite matrix itself. It follows from the triangle inequality that
	\begin{align*}
		|G^\T B G| & \le G^\T B_\natural G\,.
	\end{align*}
	Using the Gaussian integration by parts (or Stein's lemma) we have
	\begin{align*}
		\E\left(|G^\T B G|^p\right) & \le \E\left(\left(G^\T B_\natural G\right)^p\right)                                                                                                                     \\
		                            & = \tr(B_\natural)\E\left(\left(G^\T B_\natural G\right)^{p-1}\right) + 2(p-1)\E\left(\left(G^\T B_\natural^2 G\right)\left(G^\T B_\natural G\right)^{p-2}\right)        \\
		                            & \le \left(\tr(B_\natural) + 2(p-1)\norm{B_\natural}_{\mr{op}}\right)\E\left(\left(G^\T B_\natural G\right)^{p-1}\right)                                                 \\
		                            & \le \prod_{i=1}^{\lfloor p\rfloor}\left(\tr(B_\natural) + 2(p-i)\norm{B_\natural}_{\mr{op}}\right)\E\left(\left(G^\T B_\natural G\right)^{p-\lfloor p\rfloor}\right)\,,
	\end{align*}
	where the third line follows from the fact that $B_\natural^2 \preceq \norm{B_\natural}_{\mr{op}}B_\natural$, and the fourth line holds by recursion. Since $p-\lfloor p \rfloor \in [0,1)$  and $z\mapsto z^{p-\lfloor p \rfloor}$ is concave over $z\ge 0$, Jensen's inequality guarantees that
	\begin{align*}
		\E\left(\left(G^\T B_\natural G\right)^{p-\lfloor p\rfloor}\right) & \le \left(\tr(B_\natural)\right)^{p-\lfloor p \rfloor}\,.
	\end{align*}
	Combining the derived inequalities, we obtain
	\begin{align*}
		\E\left(|G^\T B G|^p\right) & \le \left(\tr(B_\natural)\right)^p \prod_{i=1}^{\lfloor p\rfloor}\left(1 +\frac{2(p-i)\norm{B_\natural}_{\mr{op}}}{\tr(B_\natural)}\right)
	\end{align*}
	The result follows by observing that $\norm{B_\natural}_{\mr{op}} = \norm{B}_{\mr{op}}$ and $\tr(B_\natural) = \norm{B}_*$.

	To obtain the simplified version of \eqref{eq:Gaussian-moment-inequality}, we can apply the AM--GM inequality to the right-hand side of \eqref{eq:Gaussian-moment-inequality} which yields
	\begin{align*}
		\E\left(|G^\T B G|^p\right) & = \norm{B}_*^p\left(1+(2p-1-\lfloor p\rfloor)\frac{\norm{B}_{\mr{op}}}{\norm{B}_*}\right)^{\lfloor p \rfloor} \\
		                            & \le \left(\norm{B}_*+p\norm{B}_{\mr{op}}\right)^p\,,
	\end{align*}
	as desired.
\end{proof}

\begin{lem}[Second Moment Method]\label{lem:PZ}
	Let $\xi\in \mbb R$ be a random variable. For any $s\in \mbb R$ we have
	\begin{align*}
		\P(\xi \ge s) & \ge \frac{\left(\E\left(\xi - s\right)_+\right)^2}{\E\left((\xi - s)_+^2\right)}\,,
	\end{align*}
	where $(x)_+\defeq \max\{x,0\}$. In particular, if $\E \xi \ge 0$, then for any $\rho\in (0,1)$ we have
	\begin{align*}
		\P(\xi \ge \rho \E \xi) & \ge \frac{(1-\rho)^2(\E \xi)^2}{\var(\xi) + (1-\rho)^2(\E \xi)^2}\,.
	\end{align*}
\end{lem}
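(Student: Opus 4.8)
The plan is to recognize this as the Paley--Zygmund (second moment) inequality and prove it by a single application of the Cauchy--Schwarz inequality. Writing $Z \defeq (\xi - s)_+$, which is a nonnegative random variable, I would first observe that $\{Z > 0\} = \{\xi > s\} \subseteq \{\xi \ge s\}$, so that $\P(\xi \ge s) \ge \P(Z > 0)$. Since $Z = Z\,\bbone(Z > 0)$, the Cauchy--Schwarz inequality gives $\E(Z) \le \big(\E(Z^2)\big)^{1/2}\big(\P(Z > 0)\big)^{1/2}$, and rearranging yields $\P(Z > 0) \ge (\E Z)^2/\E(Z^2)$. Substituting back $Z = (\xi - s)_+$ proves the first inequality; in the degenerate case $\E((\xi - s)_+^2) = 0$ (that is, $\xi \le s$ almost surely) the bound is vacuous, with $0/0$ read as $0$.

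For the second inequality I would apply the first with $s = \rho\,\E\xi$ and then estimate the resulting numerator from below and the denominator from above. For the numerator, using $(z)_+ \ge z$ together with $\rho \in (0,1)$ and $\E\xi \ge 0$, we get $\E(\xi - \rho\,\E\xi)_+ \ge \E\xi - \rho\,\E\xi = (1-\rho)\,\E\xi \ge 0$, hence $\big(\E(\xi - \rho\,\E\xi)_+\big)^2 \ge (1-\rho)^2(\E\xi)^2$. For the denominator, using $(z)_+^2 \le z^2$ together with the identity $\E(W^2) = \var(W) + (\E W)^2$ applied to $W = \xi - \rho\,\E\xi$, whose variance equals that of $\xi$ since $\rho\,\E\xi$ is deterministic, we get $\E\big((\xi - \rho\,\E\xi)_+^2\big) \le \var(\xi) + (1-\rho)^2(\E\xi)^2$.

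Finally I would combine these: because the numerator lower bound is nonnegative and the denominator upper bound is positive whenever the claimed right-hand side is not of the form $0/0$, replacing the numerator by the smaller quantity and the denominator by the larger quantity only decreases the fraction, which gives $\P(\xi \ge \rho\,\E\xi) \ge (1-\rho)^2(\E\xi)^2/\big(\var(\xi) + (1-\rho)^2(\E\xi)^2\big)$, as desired.

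I do not anticipate a genuine obstacle: the argument is short and elementary. The only points deserving a line of care are the direction of the monotonicity step at the end — one must push the numerator down and the denominator up simultaneously, which is legitimate precisely because the numerator lower bound (and hence the whole fraction) is nonnegative — and the trivial degenerate case in which $\xi \le s$ almost surely, where the stated fraction is to be interpreted as zero.
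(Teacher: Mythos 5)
Your proof is correct and follows essentially the same route as the paper: Cauchy--Schwarz against the indicator of $\{\xi\ge s\}$ for the first inequality, then the choice $s=\rho\,\E\xi$ with the numerator bounded below by $(1-\rho)\E\xi$ (the paper gets this via Jensen applied to $(\cdot)_+$, you via $(z)_+\ge z$ — the same estimate) and the denominator bounded by $\var(\xi)+(1-\rho)^2(\E\xi)^2$ using $(z)_+^2\le z^2$. The only additions on your side are the harmless remarks about the degenerate $0/0$ case and the monotonicity of the final substitution, which the paper leaves implicit.
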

\begin{proof}
	By the Cauchy--Schwarz inequality we have
	\begin{align*}
		\P(\xi \ge s)\E\left( (\xi - s)_+^2\right) & = \E\left(\bbone(\xi \ge s)\right)\E\left((\xi-s)_+^2\right) \\
		                                           & \ge \left(\E\left((\xi-s)_+\right)\right)^2\,,
	\end{align*}
	which yields the main inequality. If we further know that $\E\xi\ge 0$, then choosing $s = \rho\E \xi$ yields
	\begin{align*}
		\P(\xi \ge \rho \E \xi) & \ge \frac{\left(\E\left(\xi - \rho\E \xi \right)_+\right)^2}{\E\left((\xi - \rho \E \xi)^2\right)} \\
		                        & \ge \frac{(1-\rho)^2(\E \xi)^2}{\var(\xi) + (1-\rho)^2(\E \xi)^2}\,,
	\end{align*}
	where the second line follows by applying the Jensen's inequality in the numerator and using the assumption $\E \xi \ge 0$.
\end{proof}

\begin{lem}
	\label{lem:gradient-split-bound}
	Recall the gradient of $f_0$ given by \eqref{eq:gradient-as-a-projection}. For every $x\in \mbb R^d$ and $\tau\in (0,1]$, we have
	\begin{align*}
		\norm{\nabla f_0(x)}_{\Sigma} & \le \norm{\Sigma}_{\mr{op}}^{1/4} (1-\tau)\kappa_0\norm{\Sigma^{-1/2}x}_{\Sigma^{1/2}} + \tau \kappa_0 \norm{x}_2\,.
	\end{align*}
\end{lem}
\begin{proof}
	We can view \eqref{eq:gradient-as-a-projection} as a projection (in a Hilbert space with norm $\norm{\cdot}_{\Sigma^{1/2}}$). Therefore, using the fact that projections onto closed convex sets are \emph{firmly nonexpansive} \citep[Propostion 4.8]{BC17} and thus define contractions, for all $x,y\in \mbb R^d$ we have
	\begin{align}
		\norm{\nabla f_0(x) - \nabla f_0(y)}_{\Sigma^{1/2}} & \le \norm{\kappa_0 \Sigma^{-1/2}(x-y)}_{\Sigma^{1/2}}\,. \label{eq:gradients-contract}
	\end{align}
	Therefore, if for $\tau\in(0,1]$ we have $\tau \kappa_0\norm{\Sigma^{-1/2}x}_* < \rho_0$ which guarantees $\nabla f_0(\tau x) = \tau \kappa_0 \Sigma^{-1/2}x$, by choosing $y=\tau x$ we can write
	\begin{align*}
		\norm{\nabla f_0(x)}_{\Sigma} & \le \norm{\nabla f_0(x) - \nabla f_0(\tau x)}_{\Sigma} + \norm{\nabla f_0(\tau x)}_{\Sigma}                           \\
		                              & = \norm{\nabla f_0(x) - \nabla f_0(\tau x)}_{\Sigma} + \tau \kappa_0 \norm{x}_2                                       \\
		                              & \le \norm{\Sigma}_{\mr{op}}^{1/4} \norm{\nabla f_0(x) - \nabla f_0(\tau x)}_{\Sigma^{1/2}} + \tau \kappa_0 \norm{x}_2 \\
		                              & \le \norm{\Sigma}_{\mr{op}}^{1/4} (1-\tau)\kappa_0\norm{\Sigma^{-1/2}x}_{\Sigma^{1/2}} + \tau \kappa_0 \norm{x}_2\,,
	\end{align*}
	where used the triangle inequality in the first line, the fact that $\Sigma \preceq \norm{\Sigma}_{\mr{op}}^{1/2}\Sigma^{1/2}$ in the third line, and \eqref{eq:gradients-contract} in the fourth line.
\end{proof}
\printbibliography

@InProceedings{ZVA+19,
  author    = {Wenda Zhou and Victor Veitch and Morgane Austern and Ryan P. Adams and Peter Orbanz},
  booktitle = {International Conference on Learning Representations},
  title     = {Non-vacuous Generalization Bounds at the {ImageNet} Scale: a {PAC-Bayesian} Compression Approach},
  ourl      = {https://openreview.net/forum?id=BJgqqsAct7},
  year      = {2019},
}

@Book{Ver18,
  author     = {Vershynin, Roman},
  publisher  = {Cambridge University Press},
  title      = {High-Dimensional Probability: {A}n Introduction with Applications in Data Science},
  year       = {2018},
  series     = {Cambridge Series in Statistical and Probabilistic Mathematics},
  collection = {Cambridge Series in Statistical and Probabilistic Mathematics},
  groups     = {Probability & Statistics, High-dimensional Probability},
  place      = {Cambridge},
}

@Article{Lat97,
  author    = {Rafa\l{} Lata\l{}a},
  title     = {Estimation of moments of sums of independent real random variables},
  doi       = {10.1214/aop/1024404522},
  number    = {3},
  pages     = {1502--1513},
  volume    = {25},
  groups    = {Probability & Statistics},
  journal   = {The Annals of Probability},
  keywords  = {Estimation of moments, Rosenthal inequality, Sums of independent random variables},
  ourl      = {https://doi.org/10.1214/aop/1024404522},
  publisher = {Institute of Mathematical Statistics},
  year      = {1997},
}

@Article{Giu18,
  author    = {Giulini, Ilaria},
  title     = {Robust dimension-free {Gram} operator estimates},
  doi       = {10.3150/17-BEJ981},
  issn      = {1350-7265},
  number    = {4B},
  pages     = {3864--3923},
  volume    = {24},
  groups    = {Probability & Statistics},
  journal   = {Bernoulli},
  month     = nov,
  publisher = {Bernoulli Society for Mathematical Statistics and Probability},
  year      = {2018},
}

@Article{RS13,
  author  = {Maxim Raginsky and Igal Sason},
  title   = {Concentration of Measure Inequalities in Information Theory, Communications, and Coding},
  doi     = {10.1561/0100000064},
  issn    = {1567-2190},
  number  = {1-2},
  pages   = {1--246},
  volume  = {10},
  groups  = {Probability & Statistics},
  journal = {Foundations and Trends in Communications and Information Theory},
  year    = {2013},
}

@Article{ALPT09,
  author    = {Adamczak, Rados\l{}aw and Litvak, Alexander E. and Pajor, Alain and Tomczak-Jaegermann, Nicole},
  title     = {Quantitative estimates of the convergence of the empirical covariance matrix in log-concave ensembles},
  doi       = {10.1090/S0894-0347-09-00650-X},
  issn      = {0894-0347},
  number    = {2},
  pages     = {535--561},
  volume    = {23},
  groups    = {Probability & Statistics},
  journal   = {Journal of the American Mathematical Society},
  month     = oct,
  publisher = {American Mathematical Society (AMS)},
  year      = {2009},
}

@Misc{Ade25,
  author        = {Ishaq Aden-Ali},
  title         = {On the Injective Norm of Sums of Random Tensors and the Moments of {G}aussian Chaoses},
  doi           = {10.48550/arXiv.2503.10580},
  eprint        = {2503.10580},
  note          = {arXiv preprint; \href{https://doi.org/10.48550/arXiv.2503.10580}{\texttt{arXiv:2503.10580}}},
  archiveprefix = {arXiv},
  groups        = {Probability & Statistics},
  ourl          = {https://arxiv.org/abs/2503.10580},
  primaryclass  = {math.PR},
  year          = {2025},
}

@Article{Cat12,
  author    = {Olivier Catoni},
  title     = {Challenging the empirical mean and empirical variance: A deviation study},
  doi       = {10.1214/11-AIHP454},
  number    = {4},
  pages     = {1148--1185},
  volume    = {48},
  groups    = {Robust Statistics},
  journal   = {Annales de l'Institut Henri Poincar\'{e}, Probabilit\'{e}s et Statistiques},
  ourl      = {https://doi.org/10.1214/11-AIHP454},
  publisher = {Institut Henri Poincar\'{e}},
  year      = {2012},
}

@Article{Lus86,
  author  = {Lust-Piquard, Fran\c{c}ois},
  title   = {In\'{e}galit\'{e}s de {K}hintchine dans $C_p\ (1<p<\infty )$},
  number  = {7},
  pages   = {289--292},
  volume  = {303},
  groups  = {Probability & Statistics, Random Matrix Theory},
  journal = {C. R. Acad. Sci. Paris},
  month   = aug,
  year    = {1986},
}

@Article{Alq24,
  author  = {Pierre Alquier},
  title   = {User-friendly Introduction to {PAC}-{B}ayes Bounds},
  doi     = {10.1561/2200000100},
  issn    = {1935-8237},
  number  = {2},
  pages   = {174--303},
  volume  = {17},
  groups  = {Probability & Statistics},
  journal = {Foundations and Trends in Machine Learning},
  year    = {2024},
}

@Article{STTW25,
  author    = {Sch\"{u}tt, Carsten and Th\"{a}le, Christoph and Turchi, Nicola and Werner, Elisabeth},
  title     = {Weighted floating functions and weighted functional affine surface areas},
  doi       = {10.1090/tran/9439},
  issn      = {0002-9947},
  groups    = {Geometry},
  journal   = {Transactions of the American Mathematical Society},
  month     = sep,
  publisher = {American Mathematical Society (AMS)},
  year      = {2025},
}

@InBook{Leh10,
  author    = {Lehec, Joseph},
  booktitle = {S\'{e}minaire de Probabilit\'{e}s XLIII},
  title     = {Moments of the {G}aussian Chaos},
  doi       = {10.1007/978-3-642-15217-7_13},
  isbn      = {9783642152177},
  pages     = {327--340},
  publisher = {Springer Berlin Heidelberg},
  groups    = {Probability & Statistics},
  issn      = {1617-9692},
  month     = sep,
  year      = {2010},
}

@Article{Pao06,
  author    = {Paouris, Grigoris},
  title     = {Concentration of mass on convex bodies},
  doi       = {10.1007/s00039-006-0584-5},
  issn      = {1420-8970},
  number    = {5},
  pages     = {1021--1049},
  volume    = {16},
  groups    = {Probability & Statistics, Geometry},
  journal   = {{GAFA} Geometric And Functional Analysis},
  month     = nov,
  publisher = {Springer Science and Business Media LLC},
  year      = {2006},
}

@Book{SB14,
  author    = {Shalev-Shwartz, Shai and Ben-David, Shai},
  title     = {Understanding Machine Learning: From Theory to Algorithms},
  doi       = {10.1017/cbo9781107298019},
  isbn      = {9781107298019},
  publisher = {Cambridge University Press},
  groups    = {Statistical Learning Theory},
  month     = may,
  year      = {2014},
}

@InProceedings{DR17,
  author    = {Gintare Karolina Dziugaite and Daniel M. Roy},
  booktitle = {Proceedings of the Thirty-Third Conference on Uncertainty in Artificial Intelligence, {UAI} 2017, Sydney, Australia, August 11-15, 2017},
  title     = {Computing Nonvacuous Generalization Bounds for Deep (Stochastic) Neural Networks with Many More Parameters than Training Data},
  publisher = {{AUAI} Press},
  oeditor   = {Gal Elidan and Kristian Kersting and Alexander Ihler},
  ourl      = {http://auai.org/uai2017/proceedings/papers/173.pdf},
  year      = {2017},
}

@Misc{CG17,
  author        = {Olivier Catoni and Ilaria Giulini},
  title         = {Dimension-free {PAC}-{Bayesian} bounds for matrices, vectors, and linear least squares regression},
  doi           = {10.48550/arxiv.1712.02747},
  eprint        = {1712.02747},
  note          = {arXiv preprint; \href{https://arxiv.org/abs/1712.02747}{\texttt{arXiv:1712.02747}}},
  archiveprefix = {arXiv},
  groups        = {Probability & Statistics},
  ourl          = {https://arxiv.org/abs/1712.02747},
  primaryclass  = {math.ST},
  year          = {2017},
}

@Misc{Mur22,
  author        = {Daniel Murawski},
  title         = {Comparing moments of real log-concave random variables},
  doi           = {10.48550/arxiv.2211.05210},
  eprint        = {2211.05210},
  eprinttype    = {arxiv},
  note          = {arXiv preprint; \href{http://doi.org/10.48550/arxiv.2211.05210}{\texttt{arXiv:2211.05210}}},
  archiveprefix = {arXiv},
  groups        = {Probability & Statistics},
  primaryclass  = {math.PR},
  year          = {2022},
}

@Article{Lat06,
  author    = {Rafa\l{} Lata\l{}a},
  title     = {Estimates of moments and tails of {G}aussian chaoses},
  doi       = {10.1214/009117906000000421},
  issn      = {0091-1798},
  number    = {6},
  pages     = {2315--2331},
  volume    = {34},
  groups    = {Probability & Statistics},
  journal   = {The Annals of Probability},
  month     = nov,
  publisher = {Institute of Mathematical Statistics},
  year      = {2006},
}

@InProceedings{Ana17,
  author    = {Anantharam, Venkat},
  booktitle = {2017 {IEEE} International Symposium on Information Theory (ISIT)},
  title     = {A variational characterization of {R}\'{e}nyi divergences},
  doi       = {10.1109/ISIT.2017.8006657},
  pages     = {893--897},
  groups    = {Probability & Statistics},
  year      = {2017},
}

@InProceedings{SW97,
  author    = {Shawe-Taylor, John and Williamson, Robert C.},
  booktitle = {Proceedings of the Tenth Annual Conference on Computational Learning Theory},
  title     = {A {PAC} analysis of a {Bayesian} estimator},
  doi       = {10.1145/267460.267466},
  isbn      = {0897918916},
  location  = {Nashville, Tennessee, USA},
  pages     = {2--9},
  publisher = {Association for Computing Machinery},
  series    = {COLT '97},
  address   = {New York, NY, USA},
  numpages  = {8},
  ourl      = {https://doi.org/10.1145/267460.267466},
  year      = {1997},
}

@Article{vHan17,
  author    = {van Handel, Ramon},
  title     = {On the spectral norm of {G}aussian random matrices},
  doi       = {10.1090/tran/6922},
  issn      = {1088-6850},
  number    = {11},
  pages     = {8161--8178},
  volume    = {369},
  groups    = {Probability & Statistics},
  journal   = {Transactions of the American Mathematical Society},
  month     = may,
  publisher = {American Mathematical Society (AMS)},
  year      = {2017},
}

@Book{PW24,
  author    = {Polyanskiy, Yury and Wu, Yihong},
  title     = {Information Theory: From Coding to Learning},
  publisher = {Cambridge University Press},
  groups    = {Probability & Statistics},
  place     = {Cambridge},
  year      = {2024},
}

@Article{Tro18,
  author    = {Tropp, Joel A.},
  title     = {Second-order matrix concentration inequalities},
  doi       = {10.1016/j.acha.2016.07.005},
  issn      = {1063-5203},
  number    = {3},
  pages     = {700--736},
  volume    = {44},
  groups    = {Probability & Statistics, Random Matrix Theory},
  journal   = {Applied and Computational Harmonic Analysis},
  month     = may,
  publisher = {Elsevier BV},
  year      = {2018},
}

@Book{Tal14,
  author    = {Michel Talagrand},
  publisher = {Springer Berlin Heidelberg},
  title     = {Upper and Lower Bounds for Stochastic Processes},
  year      = {2014},
  groups    = {Probability & Statistics, Stochastic Processes},
}

@Book{Vil09,
  author    = {Villani, C\'{e}dric},
  publisher = {Springer Berlin Heidelberg},
  title     = {Optimal Transport: Old and New},
  year      = {2009},
  isbn      = {9783540710509},
  doi       = {10.1007/978-3-540-71050-9},
  groups    = {Geometry},
  issn      = {0072-7830},
  journal   = {Grundlehren der mathematischen Wissenschaften},
}

@Article{GU14,
  author  = {Armengol Gasull and Frederic Utzet},
  title   = {Approximating {Mills} ratio},
  doi     = {10.1016/j.jmaa.2014.05.034},
  issn    = {0022-247X},
  number  = {2},
  pages   = {1832--1853},
  volume  = {420},
  groups  = {Probability \& Statistics, Probability & Statistics},
  journal = {Journal of Mathematical Analysis and Applications},
  year    = {2014},
}

@Book{BC17,
  author    = {Heinz H. Bauschke and Patrick L. Combettes},
  publisher = {Springer International Publishing},
  title     = {Convex Analysis and Monotone Operator Theory in Hilbert Spaces},
  year      = {2017},
  doi       = {10.1007/978-3-319-48311-5},
  groups    = {Convex Optimization},
}

@InBook{Lat17,
  author    = {Lata\l{}a, Rafa\l{}},
  booktitle = {Convexity and Concentration},
  title     = {On Some Problems Concerning Log-Concave Random Vectors},
  doi       = {10.1007/978-1-4939-7005-6_16},
  isbn      = {9781493970056},
  pages     = {525--539},
  publisher = {Springer New York},
  groups    = {Probability & Statistics},
  issn      = {2198-3224},
  year      = {2017},
}

@Article{Oli16,
  author    = {Oliveira, Roberto Imbuzeiro},
  title     = {The lower tail of random quadratic forms with applications to ordinary least squares},
  doi       = {10.1007/s00440-016-0738-9},
  issn      = {1432-2064},
  number    = {3-4},
  pages     = {1175--1194},
  volume    = {166},
  groups    = {Stochastic Processes},
  journal   = {Probability Theory and Related Fields},
  month     = sep,
  publisher = {Springer},
  year      = {2016},
}

@Article{LP91,
  author    = {Lust-Piquard, Fran\c{c}oise and Pisier, Gilles},
  title     = {Non commutative {K}hintchine and {P}aley inequalities},
  doi       = {10.1007/BF02384340},
  issn      = {0004-2080},
  number    = {1--2},
  pages     = {241--260},
  volume    = {29},
  groups    = {Probability & Statistics, Random Matrix Theory},
  journal   = {Arkiv f\"{o}r Matematik},
  month     = dec,
  publisher = {International Press of Boston},
  year      = {1991},
}

@InProceedings{McA98,
  author    = {McAllester, David A.},
  booktitle = {Proceedings of the Eleventh Annual Conference on Computational Learning Theory},
  title     = {Some {PAC-Bayesian} theorems},
  doi       = {10.1145/279943.279989},
  isbn      = {1581130570},
  location  = {Madison, Wisconsin, USA},
  pages     = {230--234},
  publisher = {Association for Computing Machinery},
  series    = {COLT'98},
  address   = {New York, NY, USA},
  groups    = {Statistical Learning Theory},
  numpages  = {5},
  ourl      = {https://doi.org/10.1145/279943.279989},
  year      = {1998},
}

@Article{Cha19,
  author    = {Chatterjee, Sourav},
  journal   = {The Annals of Probability},
  title     = {A general method for lower bounds on fluctuations of random variables},
  year      = {2019},
  number    = {4},
  pages     = {2140--2171},
  volume    = {47},
  doi       = {10.1214/18-aop1304},
  groups    = {Probability & Statistics},
  publisher = {Institute of Mathematical Statistics},
}

@Book{EG25,
  author    = {Evans, Lawrence C. and Gariepy, Ronald F.},
  publisher = {CRC Press LLC},
  title     = {Measure Theory and Fine Properties of Functions},
  year      = {2025},
  address   = {Milton},
  edition   = {2nd ed.},
  isbn      = {9781032946443},
  series    = {Textbooks in Mathematics Series},
  doi       = {10.1201/9781003583004},
  groups    = {Geometry},
}

@Article{BvH24,
  author    = {Brailovskaya, Tatiana and van Handel, Ramon},
  title     = {Universality and sharp matrix concentration inequalities},
  doi       = {10.1007/s00039-024-00692-9},
  issn      = {1420-8970},
  number    = {6},
  pages     = {1734--1838},
  volume    = {34},
  groups    = {Probability & Statistics},
  journal   = {Geometric and Functional Analysis},
  month     = oct,
  publisher = {Springer Science and Business Media LLC},
  year      = {2024},
}

@Book{Wai19,
  author    = {Wainwright, Martin J.},
  publisher = {Cambridge University Press},
  title     = {High-Dimensional Statistics: {A} Non-Asymptotic Viewpoint},
  year      = {2019},
  date      = {2019-02-21},
  groups    = {Probability & Statistics, High-dimensional Statistics},
  oean      = {9781108498029},
}

@InProceedings{Ale95,
  author    = {Alesker, Simeon},
  booktitle = {Geometric Aspects of Functional Analysis},
  title     = {$\psi_2$-Estimate for the {E}uclidean Norm on a Convex Body in Isotropic Position},
  doi       = {10.1007/978-3-0348-9090-8_1},
  isbn      = {9783034890908},
  pages     = {1--4},
  publisher = {Birkh\"{a}user Basel},
  year      = {1995},
}

@Article{ALL+14,
  author  = {Adamczak, Rados\l{}aw and Lata\l{}a, Rafa\l{} and Litvak, Alexander E. and Oleszkiewicz, Krzysztof and Pajor, Alain and Tomczak-Jaegermann, Nicole},
  title   = {A Short Proof of {P}aouris’ Inequality},
  doi     = {10.4153/CMB-2012-014-5},
  number  = {1},
  pages   = {3--8},
  volume  = {57},
  groups  = {Probability & Statistics, Geometry},
  journal = {Canadian Mathematical Bulletin},
  year    = {2014},
}

@Misc{PG22,
  author        = {Antoine Picard-Weibel and Benjamin Guedj},
  title         = {On change of measure inequalities for $f$-divergences},
  doi           = {10.48550/arxiv.2202.05568},
  eprint        = {2202.05568},
  note          = {arXiv preprint; \href{https://doi.org/10.48550/ARXIV.2202.05568}{\texttt{arXiv:2202.05568}}},
  archiveprefix = {arXiv},
  groups        = {Probability & Statistics},
  primaryclass  = {stat.ML},
  year          = {2022},
}

@Book{MRT19,
  author    = {Mohri, Mehryar and Rostamizadeh, Afshin and Talwalkar, Ameet},
  publisher = {The MIT Press},
  title     = {Foundations of Machine Learning},
  year      = {2019},
  isbn      = {0262039400},
  date      = {2019-04-01},
  ean       = {9780262039406},
  groups    = {Probability & Statistics},
  pagetotal = {504},
}

@Article{BL88,
  author  = {B\'{a}r\'{a}ny, I. and Larman, D. G.},
  title   = {Convex bodies, economic cap coverings, random polytopes},
  doi     = {10.1112/S0025579300015266},
  number  = {2},
  pages   = {274--291},
  volume  = {35},
  groups  = {Geometry},
  journal = {Mathematika},
  year    = {1988},
}

@Article{AC11,
  author    = {Jean-Yves Audibert and Olivier Catoni},
  title     = {{Robust linear least squares regression}},
  doi       = {10.1214/11-AOS918},
  number    = {5},
  pages     = {2766--2794},
  volume    = {39},
  groups    = {Robust Statistics},
  journal   = {The Annals of Statistics},
  publisher = {Institute of Mathematical Statistics},
  year      = {2011},
}

@Article{SW90,
  author    = {Carsten Sch\"{u}tt and Elisabeth Werner},
  title     = {The Convex Floating Body},
  issn      = {00255521, 19031807},
  number    = {2},
  pages     = {275--290},
  urldate   = {2025-05-10},
  volume    = {66},
  groups    = {Geometry},
  journal   = {Mathematica Scandinavica},
  publisher = {Mathematica Scandinavica},
  year      = {1990},
}

@Book{BLM13,
  author    = {St\'{e}phane Boucheron and G\'{a}bor Lugosi and Pascal Massart},
  publisher = {Oxford University Press},
  title     = {Concentration inequalities: {A} nonasymptotic theory of independence},
  year      = {2013},
  address   = {Oxford},
  isbn      = {978-0199535255},
  month     = feb,
  groups    = {Probability & Statistics, High-dimensional Probability},
  odoi      = {10.1093/acprof:oso/9780199535255.001.0001},
}

@InCollection{Cat07,
  author    = {Olivier Catoni},
  booktitle = {{IMS} Lecture Notes Monograph Series},
  title     = {{PAC-Bayesian} Supervised Classification: The Thermodynamics of Statistical Learning},
  doi       = {10.1214/074921707000000391},
  publisher = {Institute of Mathematical Statistics},
  volume    = {56},
  groups    = {Stochastic Processes},
  year      = {2007},
}

@InProceedings{LLS10,
  author    = {Lever, Guy and Laviolette, Fran{\c{c}}ois and Shawe-Taylor, John},
  booktitle = {Algorithmic Learning Theory},
  title     = {Distribution-Dependent {PAC-Bayes} Priors},
  editor    = {Hutter, Marcus and Stephan, Frank and Vovk, Vladimir and Zeugmann, Thomas},
  isbn      = {978-3-642-16108-7},
  pages     = {119--133},
  publisher = {Springer Berlin Heidelberg},
  address   = {Berlin, Heidelberg},
  year      = {2010},
}

@Article{LN20,
  author  = {Rafa\l{} Lata\l{}a and Piotr Nayar},
  title   = {Hadamard products and moments of random vectors},
  doi     = {10.1016/j.aim.2020.107414},
  issn    = {0001-8708},
  pages   = {107414},
  volume  = {375},
  groups  = {Stochastic Processes},
  journal = {Advances in Mathematics},
  ourl    = {https://www.sciencedirect.com/science/article/pii/S0001870820304424},
  year    = {2020},
}

@Article{vEH14,
  author   = {van Erven, Tim and Harremos, Peter},
  journal  = {{IEEE} Transactions on Information Theory},
  title    = {{R}\'{e}nyi Divergence and {K}ullback--{L}eibler Divergence},
  year     = {2014},
  number   = {7},
  pages    = {3797--3820},
  volume   = {60},
  doi      = {10.1109/TIT.2014.2320500},
  groups   = {Probability & Statistics},
  keywords = {Entropy;Q measurement;Data processing;Convergence;Markov processes;Testing;\(\alpha\) -divergence;Bhattacharyya distance;information divergence;Kullback-Leibler divergence;Pythagorean inequality;Rényi divergence},
}

@Article{KL17,
  author    = {Koltchinskii, Vladimir and Lounici, Karim},
  title     = {Concentration inequalities and moment bounds for sample covariance operators},
  doi       = {10.3150/15-BEJ730},
  issn      = {1350-7265},
  number    = {1},
  volume    = {23},
  groups    = {Probability & Statistics},
  journal   = {Bernoulli},
  month     = feb,
  publisher = {Bernoulli Society for Mathematical Statistics and Probability},
  year      = {2017},
}

@InProceedings{LS02,
  author    = {Langford, John and Shawe-Taylor, John},
  booktitle = {Advances in Neural Information Processing Systems},
  title     = {{PAC-Bayes} \& Margins},
  editor    = {S. Becker and S. Thrun and K. Obermayer},
  publisher = {MIT Press},
  volume    = {15},
  groups    = {Probability & Statistics},
  ourl      = {https://proceedings.neurips.cc/paper_files/paper/2002/file/68d309812548887400e375eaa036d2f1-Paper.pdf},
  year      = {2002},
}

@Article{BBvH23,
  author    = {Afonso S. Bandeira and March T. Boedihardjo and Ramon van Handel},
  title     = {Matrix concentration inequalities and free probability},
  doi       = {10.1007/s00222-023-01204-6},
  number    = {1},
  pages     = {419--487},
  volume    = {234},
  groups    = {Probability & Statistics},
  journal   = {Inventiones mathematicae},
  month     = jun,
  publisher = {Springer Science and Business Media {LLC}},
  year      = {2023},
}

@Article{LW08,
  author   = {Rafa\l{} Lata\l{}a and Jakub. O. Wojtaszczyk},
  title    = {On the infimum convolution inequality},
  language = {eng},
  number   = {2},
  pages    = {147--187},
  volume   = {189},
  groups   = {Probability & Statistics},
  journal  = {Studia Mathematica},
  keywords = {infimum convolution; concetration; log-concave measure; isoperimetry; ball},
  ourl     = {http://eudml.org/doc/284942},
  year     = {2008},
}

@Article{Roc99,
  author  = {Rockafellar, R Tyrrell},
  title   = {Second-order convex analysis},
  pages   = {1--16},
  volume  = {1},
  groups  = {Geometry},
  journal = {Journal of Nonlinear and Convex Analysis},
  year    = {1999},
}

@Article{Tal87,
  author    = {Michel Talagrand},
  title     = {Regularity of {G}aussian processes},
  doi       = {10.1007/bf02392556},
  number    = {0},
  pages     = {99--149},
  volume    = {159},
  groups    = {Stochastic Processes},
  journal   = {Acta Mathematica},
  publisher = {International Press of Boston},
  year      = {1987},
}

@Article{AB07,
  author     = {Audibert, Jean-Yves and Bousquet, Olivier},
  journal    = {Journal of Machine Learning Research},
  title      = {Combining {PAC}-{B}ayesian and Generic Chaining Bounds},
  year       = {2007},
  issn       = {1532-4435},
  pages      = {863--889},
  volume     = {8},
  groups     = {Statistical Learning Theory},
  issue_date = {5/1/2007},
  numpages   = {27},
}

@Article{Zhi24,
  author    = {Zhivotovskiy, Nikita},
  title     = {Dimension-free bounds for sums of independent matrices and simple tensors via the variational principle},
  doi       = {10.1214/23-ejp1021},
  issn      = {1083-6489},
  pages     = {1--28},
  volume    = {29},
  groups    = {Probability & Statistics},
  journal   = {Electronic Journal of Probability},
  month     = jan,
  publisher = {Institute of Mathematical Statistics},
  year      = {2024},
}

@InProceedings{OH21,
  author    = {Ohnishi, Yuki and Honorio, Jean},
  booktitle = {Proceedings of The 24th International Conference on Artificial Intelligence and Statistics},
  title     = {Novel Change of Measure Inequalities with Applications to {PAC-Bayesian} Bounds and {Monte Carlo} Estimation},
  pages     = {1711--1719},
  publisher = {PMLR},
  series    = {Proceedings of Machine Learning Research},
  volume    = {130},
  groups    = {Probability & Statistics},
  month     = apr,
  oeditor   = {Banerjee, Arindam and Fukumizu, Kenji},
  ourl      = {https://proceedings.mlr.press/v130/ohnishi21a.html},
  pdf       = {http://proceedings.mlr.press/v130/ohnishi21a/ohnishi21a.pdf},
  year      = {2021},
}

@Misc{BCSvH25,
  author        = {Afonso S. Bandeira and Giorgio Cipolloni and Dominik Schr\"{o}der and Ramon van Handel},
  note          = {arXiv preprint; \href{https://doi.org/10.48550/arXiv.2406.11453}{\texttt{arXiv:2406.11453}}},
  title         = {Matrix Concentration Inequalities and Free Probability {II}. {T}wo-sided Bounds and Applications},
  year          = {2025},
  archiveprefix = {arXiv},
  doi           = {10.48550/arXiv.2406.11453},
  eprint        = {2406.11453},
  groups        = {Probability & Statistics, Random Matrix Theory},
  ourl          = {https://arxiv.org/abs/2406.11453},
  primaryclass  = {math.PR},
}
\end{document}